\documentclass{amsart}

\usepackage[section]{placeins}

\usepackage{amsmath}             
\usepackage{amscd}
\usepackage{amssymb}             
\usepackage{amsfonts}            
\usepackage{latexsym}            
\usepackage{amsthm}              
\usepackage{graphicx}

\usepackage{enumerate}
\usepackage{mathrsfs} 
\usepackage{thmtools}
\usepackage{thm-restate}

\newcommand{\SL}{\operatorname{SL}}

\newtheorem{lause}{Theorem}[section]


\newtheorem{lemma}[lause]{Lemma}
\newtheorem{seur}[lause]{Corollary}
\newtheorem{prop}[lause]{Proposition}

\newtheorem*{lause*}{Theorem}

\theoremstyle{definition}
\newtheorem{maar}[lause]{Definition}

\theoremstyle{remark}
\newtheorem{remark}[lause]{Remark}
 
\newtheorem*{mot*}{Motivation}
\newtheorem*{acknow*}{Acknowledgements}

\numberwithin{equation}{section}

\usepackage{hyphenat}
\hyphenation{counter-example}
\hyphenation{an-oth-er}
\hyphenation{meat-axe}
\hyphenation{al-though} 

\allowdisplaybreaks

\begin{document}

\title[Exterior and symmetric squares in characteristic two]{Decomposition of exterior and symmetric squares in characteristic two}

\author{\vspace{-2ex}Mikko Korhonen}
\address{Department of Mathematics, Southern University of Science and Technology, \text{Shenzhen} 518055, Guangdong, P. R. China}
\email{korhonen\_mikko@hotmail.com}
\thanks{Partially supported by NSFC grants 11771200 and 11931005.}

\date{\today}

\begin{abstract}

Let $V$ be a finite-dimensional vector space over a field of characteristic two. As the main result of this paper, for every nilpotent element $e \in \mathfrak{sl}(V)$, we describe the Jordan normal form of $e$ on the $\mathfrak{sl}(V)$-modules $\wedge^2(V)$ and $S^2(V)$. In the case where $e$ is a regular nilpotent element, we are able to give a closed formula.

We also consider the closely related problem of describing, for every unipotent element $u \in \operatorname{SL}(V)$, the Jordan normal form of $u$ on $\wedge^2(V)$ and $S^2(V)$. A recursive formula for the Jordan block sizes of $u$ on $\wedge^2(V)$ was given by Gow and Laffey (J. Group Theory 9 (2006), 659--672). We show that their proof can be adapted to give a similar formula for the Jordan block sizes of $u$ on $S^2(V)$.

\end{abstract}

\maketitle

\section{Introduction}

Let $V$ be a finite-dimensional vector space over a field. Let $u \in \SL(V)$ be a unipotent linear map and let $e \in \mathfrak{sl}(V)$ be a nilpotent linear map. We consider the following two basic questions in representation theory. 

\begin{itemize}
\item[\textbf{Q1.}] What are the Jordan block sizes of $u$ in its $\SL(V)$-action on the exterior square $\wedge^2(V)$ and the symmetric square $S^2(V)$? 
\item[\textbf{Q2.}] What are the Jordan block sizes of $e$ in its $\mathfrak{sl}(V)$-action on $\wedge^2(V)$ and $S^2(V)$? 
\end{itemize}

As we will see later in this introduction, results from the literature quickly reduce both problems to the case where $u$ and $e$ act on $V$ with a single Jordan block, so we assume that this is the case. Then good answers to both questions are known in odd characteristic \cite[Theorem 2]{Barry} \cite[Theorem 24]{McNinchAdjoint}. 

In this paper we will consider the characteristic two case, where the previously known results are as follows. A formula for the Jordan block sizes of $u$ on $\wedge^2(V)$ has been given by Gow and Laffey \cite[Theorem 2]{GowLaffey}. With \cite[Theorem 2]{GowLaffey} and \cite[Corollary 3.11]{SymondsSymtoExt}, one can calculate the Jordan decomposition of $u$ on $S^2(V)$, modulo Jordan blocks of even size. Then \cite[Proposition 2.2]{SymondsSymtoExt} provides a recursive algorithm for computing the Jordan block sizes of $u$ on $S^2(V)$.

The main purpose of this paper is to provide explicit formulae in characteristic two for the Jordan block sizes of $u$ on $S^2(V)$ (Theorem \ref{thm:unipsym}) and the Jordan block sizes of $e$ on $\wedge^2(V)$ and $S^2(V)$ (Theorems \ref{thm:mainthmextnilpotent} -- \ref{thm:mainthmrecursivenilpotentsym}). 

For the Jordan block sizes of $u$ on $S^2(V)$, we give a recursive formula which is analogous to \cite[Theorem 2]{GowLaffey}. In the nilpotent case, we will compute a Jordan basis for the action of $e$ on $V \otimes V$ (Theorem \ref{thm:basistensorsquare}), and use it to find a closed formula for the Jordan block sizes of $e$ on $\wedge^2(V)$ and $S^2(V)$ (Theorems \ref{thm:mainthmextnilpotent} -- \ref{thm:mainthmsymnilpotent}).

For the rest of this paper, we fix a field $K$ and make the following assumption. 

\begin{center}\emph{Assume that $\operatorname{char} K = 2$.}\end{center} 

To describe our results, it will be convenient to do so in terms of representation theory. Let $q = 2^{\alpha}$, where $\alpha$ is a positive integer. Let $C_q$ be a cyclic group of order $q$. Recall that there are a total of $q$ indecomposable $K[C_q]$-modules $V_1$, $\ldots$, $V_q$, where $\dim V_i = i$ and a generator of $C_q$ acts on $V_i$ as a single $i \times i$ unipotent Jordan block. Denote $V_0 = 0$. For a $K$-vector space $W$, we denote $W^0 = 0$ and $W^d = W \oplus \cdots \oplus W$ ($d$ copies) for an integer $d > 0$.

Then question \textbf{Q1} is equivalent to the problem of decomposing $\wedge^2(V)$ and $S^2(V)$ into indecomposable summands for every $K[C_q]$-module $V$. Note that we have isomorphisms \begin{align}\label{eq:additivext}\wedge^2(V \oplus W) &\cong \wedge^2(V) \oplus (V \otimes W) \oplus \wedge^2(W), \\ \label{eq:additivesym}S^2(V \oplus W) &\cong S^2(V) \oplus (V \otimes W) \oplus S^2(W)\end{align} of $K[C_q]$-modules. Thus \textbf{Q1} is reduced to the problem of decomposing $V_m \otimes V_n$, $\wedge^2(V_n)$, and $S^2(V_n)$ into indecomposable summands for integers $0 < n,m \leq q$.

The decomposition of $V_m \otimes V_n$ has been extensively studied in all characteristics, see for example \cite{Srinivasan}, \cite{Ralley}, \cite{McFall}, \cite{Renaud}, \cite{Norman}, \cite{Norman2}, \cite{Hou}, and \cite{Barry}. In our setting of characteristic two, we can use the following result, which gives a recursive description for the decomposition of $V_m \otimes V_n$.

\begin{lause}[{\cite[(2.5a)]{GreenModular}, \cite[Lemma 1, Corollary 3]{GowLaffey}}]\label{thm:tensordecompchar2}
Let $0 < m \leq n \leq q$ and suppose that $q/2 < n \leq q$. Then the following statements hold:
\begin{enumerate}[\normalfont (i)]
\item If $n = q$, then $V_m \otimes V_n \cong V_q^m$ as $K[C_q]$-modules.
\item If $m+n > q$, then $V_m \otimes V_n \cong V_q^{n+m-q} \oplus (V_{q-n} \otimes V_{q-m})$ as $K[C_q]$-modules.
\item If $m+n \leq q$, then $V_m \otimes V_n \cong V_{q-d_t} \oplus \cdots \oplus V_{q-d_1}$ as $K[C_q]$-modules, where $V_m \otimes V_{q-n} \cong V_{d_1} \oplus \cdots \oplus V_{d_t}$.
\end{enumerate}
\end{lause}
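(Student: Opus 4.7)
The plan uses three main facts about $K[C_q]$-modules: (a) $V_q$ is the regular representation, hence both projective and injective; (b) the Heller operator $\Omega$ satisfies $\Omega V_d \cong V_{q-d}$ for $1 \le d \le q-1$, via the projective cover $0 \to V_{q-d} \to V_q \to V_d \to 0$; and (c) two auxiliary lemmas, namely that $V_a \otimes V_b$ has $\min(a,b)$ indecomposable summands (by self-duality together with $\dim \operatorname{Hom}_{K[C_q]}(V_a, V_b) = \min(a,b)$), and that $V_a \otimes V_b$ has no projective summand whenever $a + b \le q$ (since $(X + Y + XY)^{a+b-1}$ vanishes in $K[X, Y]/(X^a, Y^b)$, bounding the nilpotency index of $c \otimes c - 1$ on $V_a \otimes V_b$ by $a + b - 1$).

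Part (i) is immediate: for any $K[C_q]$-module $M$, the untwisting map $m \otimes g \mapsto g^{-1} m \otimes g$ is a $K[C_q]$-module isomorphism between $M \otimes K[C_q]$ (with the diagonal action) and $K[C_q]^{\dim M}$, so $V_m \otimes V_q \cong V_q^m$. For part (iii), where $m + n \le q$, I would tensor the short exact sequence $0 \to V_n \to V_q \to V_{q-n} \to 0$ by $V_m$ to obtain
\begin{equation*}
0 \to V_m \otimes V_n \to V_q^m \to V_m \otimes V_{q-n} \to 0.
\end{equation*}
Fact (c) shows that neither end has a projective summand (using $m + n \le q$ and $m \le n$), and that $V_m \otimes V_{q-n}$ has $\min(m, q-n) = m$ indecomposable summands---matching the rank of the middle term. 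Hence $V_q^m$ is the projective cover of $V_m \otimes V_{q-n}$, and the kernel is
\begin{equation*}
V_m \otimes V_n \cong \Omega(V_m \otimes V_{q-n}) \cong \Omega\bigl(V_{d_1} \oplus \cdots \oplus V_{d_t}\bigr) \cong V_{q-d_1} \oplus \cdots \oplus V_{q-d_t}.
\end{equation*}

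For part (ii), where $m + n > q$, I would combine two such sequences: one obtained by tensoring $V_m$ with $0 \to V_{q-n} \to V_q \to V_n \to 0$, and another by tensoring $V_{q-n}$ with $0 \to V_m \to V_q \to V_{q-m} \to 0$. Both have $V_m \otimes V_{q-n}$ as kernel, with cokernels $V_m \otimes V_n$ and $V_{q-n} \otimes V_{q-m}$ respectively. By (c), the modules $V_m \otimes V_{q-n}$ (using $m \le n$) and $V_{q-n} \otimes V_{q-m}$ (using $(q-m) + (q-n) < q$) have no projective summands. Stripping off the projective summands of $V_m \otimes V_n$ in the first sequence presents its non-projective part $X$ as a cokernel in a projective cover, so $V_m \otimes V_{q-n} \cong \Omega X$; the second sequence yields $V_m \otimes V_{q-n} \cong \Omega(V_{q-n} \otimes V_{q-m})$ directly. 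Since $\Omega$ is an autoequivalence of the stable module category, $X \cong V_{q-n} \otimes V_{q-m}$, and the dimension count $mn - (q-m)(q-n) = q(m + n - q)$ fixes the projective part as $V_q^{m+n-q}$. The main obstacle is the clean verification of the no-projective-summand statement in (c); once the multinomial computation in $K[X, Y]/(X^a, Y^b)$ is in hand, the rest of the argument is driven by the interplay between $\Omega$ and the standard short exact sequences attached to submodules of $V_q$.
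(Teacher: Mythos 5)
The paper itself contains no proof of this theorem: it is quoted directly from \cite[(2.5a)]{GreenModular} and \cite[Lemma 1, Corollary 3]{GowLaffey}, so there is no internal argument to compare yours against. Your proposal is a correct, self-contained proof, and it is organized differently from the cited sources: Gow and Laffey establish (ii) by a concrete computation inside the tensor product (locating a free direct summand of the correct rank and identifying the complement), and deduce (iii) from the involution $V_d \mapsto V_{q-d}$ on non-projective indecomposables, which is precisely your Heller operator $\Omega$; Green's version lives in the representation algebra. Your homological packaging --- tensoring the sequences $0 \to V_{q-d} \to V_q \to V_d \to 0$ by a third module and reading off syzygies --- treats (ii) and (iii) uniformly and makes the role of $\Omega$ explicit, at the cost of needing the two auxiliary facts in your item (c), both of which you justify correctly ($(X+Y+XY)^{a+b-1}=0$ in $K[X,Y]/(X^a,Y^b)$ by the degree count, and the summand count $\dim\operatorname{Hom}_{K[C_q]}(V_a,V_b)=\min(a,b)$ via self-duality of the $V_d$). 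Two small points deserve one explicit line each in a write-up: first, that the surjections $V_q^m \to V_m\otimes V_{q-n}$ and $V_q^{q-n}\to V_{q-n}\otimes V_{q-m}$ really are projective covers (either because the kernel contains no projective, hence no injective, direct summand, or because the induced map on heads is a surjection between spaces of equal dimension $\min(a,b)$); and second, that $\Omega$ reflects isomorphisms between modules with no projective summands, which is what lets you conclude $X\cong V_{q-n}\otimes V_{q-m}$ from $\Omega X\cong\Omega(V_{q-n}\otimes V_{q-m})$. The degenerate cases $n=q$ or $m=q$ in (ii) collapse to (i) and cause no trouble.
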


Note that with Theorem \ref{thm:tensordecompchar2}, we are able to calculate $V_m \otimes V_n$ for any given $0 < m \leq n \leq q$. We either get an explicit decomposition (case (i)), or an expression of $V_m \otimes V_n$ in terms of a tensor product $V_{m'} \otimes V_{n'}$ for some $0 < m' \leq n'$ with $n' < n$. In the latter case we can consider $V_{m'} \otimes V_{n'}$ as a $K[C_{q'}]$-module, where $q'$ is a power of $2$ such that $q'/2 < n' \leq q'$. Thus by applying Theorem \ref{thm:tensordecompchar2} repeatedly, we can quickly calculate the decomposition of $V_m \otimes V_n$ into indecomposable summands.

For the decomposition of $\wedge^2(V_n)$, a recursive formula in similar vein as Theorem \ref{thm:tensordecompchar2} was found by Gow and Laffey \cite{GowLaffey}.

\begin{lause}[{\cite[Theorem 2]{GowLaffey}}]\label{thm:unipext}
Suppose that $q/2 < n \leq q$. Then we have $$\wedge^2(V_n) \cong \wedge^2(V_{q-n}) \oplus V_q^{n-q/2-1} \oplus V_{3q/2-n}$$ as $K[C_q]$-modules.
\end{lause}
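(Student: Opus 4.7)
The plan is to follow the strategy of Gow and Laffey: fix an explicit Jordan basis of $V_n$ and exhibit cyclic generators for each indecomposable summand on the right-hand side. Let $u$ be a generator of $C_q$ and let $v_1, \ldots, v_n$ be a basis of $V_n$ satisfying $(u-1) v_i = v_{i+1}$ for $i < n$ and $(u-1) v_n = 0$. Then $\wedge^2(V_n)$ has basis $\{v_i \wedge v_j : 1 \leq i < j \leq n\}$, on which
\[
(u-1)(v_i \wedge v_j) = v_i \wedge v_{j+1} + v_{i+1} \wedge v_j + v_{i+1} \wedge v_{j+1}
\]
(with $v_{n+1} = 0$). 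The third term is the characteristic-two correction, absent in odd characteristic.

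The three summands can be located as follows. The submodule $\operatorname{span}(v_{2n-q+1}, \ldots, v_n)$ of $V_n$ is isomorphic to $V_{q-n}$, and taking its exterior square gives an embedded copy of $\wedge^2(V_{q-n})$ in $\wedge^2(V_n)$. This embedded copy is typically not a direct summand — its socle can already coincide with the socle of other cyclic submodules one wants to peel off — so the first task is to adjust each of its generators by correction terms involving $v_i \wedge v_j$ with $i \leq 2n-q$, so that the resulting submodule is still isomorphic to $\wedge^2(V_{q-n})$ but now admits a direct-sum complement. For the remaining part, natural candidates for cyclic generators are elements of the form $v_1 \wedge v_j$ for suitable $j$: for $j$ in a certain range these should generate copies of $V_q$ (totalling $n - q/2 - 1$ copies), while for a specific boundary value of $j$ the generated submodule should be isomorphic to $V_{3q/2-n}$.

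The main obstacle is controlling the characteristic-two correction $v_{i+1} \wedge v_{j+1}$ under iterated application of $(u-1)$. This term causes the orbits of natural candidate generators to mix across potential summands (for instance, in the case $q = 8$, $n = 5$ one finds that $v_1 \wedge v_3$ has nilpotency index $6$, matching none of the expected block sizes), so one must choose the generators and the correction terms carefully so that the extraneous cross-terms align or telescope to produce cyclic submodules of the predicted nilpotency indices. This is where the key technical content of the proof lies.

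Finally, verification of the direct sum decomposition reduces to a linear-independence check, since the dimensions $\binom{q-n}{2} + q(n - q/2 - 1) + (3q/2 - n)$ sum to $\binom{n}{2} = \dim \wedge^2(V_n)$. Linear independence can then be established by examining the leading term of each iterate $(u-1)^k x$ with respect to a suitable total order on the basis $\{v_i \wedge v_j\}$ of $\wedge^2(V_n)$, so that the sum of the constructed cyclic submodules exhausts the whole module.
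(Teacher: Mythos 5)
Your proposal is an outline of a strategy rather than a proof: the decisive step is missing. You correctly set up the action of $u-1$ on $\wedge^2(V_n)$, identify plausible locations for the three summands, and verify that the dimensions add up to $\binom{n}{2}$. But the entire content of the theorem lies in actually producing generators whose cyclic submodules have the nilpotency indices $q$, $3q/2-n$, and those of $\wedge^2(V_{q-n})$, and in proving those indices. You explicitly defer this (``one must choose the generators and the correction terms carefully so that the extraneous cross-terms align or telescope \ldots This is where the key technical content of the proof lies''), and you even exhibit evidence that the naive candidates fail (your $q=8$, $n=5$, $v_1\wedge v_3$ example). A dimension count plus a promise that suitable generators exist does not establish the isomorphism; without the explicit correction terms and the computation of the resulting nilpotency indices, nothing has been proved. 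The final linear-independence-via-leading-terms step is also only sketched, and it depends on the very generators you have not constructed.

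It is also worth noting that the argument this paper models itself on (Gow--Laffey's proof of this statement, mirrored in Section 2 for $S^2(V_n)$) avoids explicit generators entirely. It proceeds by induction on $n$: for $n=2s$ even one writes $V_n\cong U_s^G$ as a module induced from the index-two subgroup $H$ and uses $\wedge^2(U^G)\cong\wedge^2(U)^G\oplus U^{\otimes G}$ (Lemma \ref{lemma:inducedextsquare}) together with the inductive hypothesis; for $n$ odd one restricts to $H$, computes $\wedge^2(V_n)_H$ from the inductive hypothesis and the tensor-product recursion, and invokes the uniqueness statement of Lemma \ref{lemma:uniquerestriction} to recover $\wedge^2(V_n)$ from its restriction. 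If you want to complete a proof along your explicit-basis lines, the natural model is instead Sections \ref{section:jordanbasis}--\ref{section:jordanbasis2} of this paper, where such generators (the vectors $w_s$) are written down and analysed in closed form --- but that is done there for the nilpotent action on $W_n$, and transporting it to the unipotent case is not automatic (compare $\wedge^2(V_4)\cong V_2\oplus V_4$ versus $\wedge^2(W_4)\cong W_3^2$). Either way, the construction you label as ``the key technical content'' must actually be carried out.
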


It turns out that there is a similar recurrence for the decomposition of $S^2(V_n)$, which we will prove in the next section. Our proof will follow along the same lines as the proof of Theorem \ref{thm:unipext} in \cite{GowLaffey}.

\begin{lause}\label{thm:unipsym}
Suppose that $q/2 < n \leq q$. Then we have $$S^2(V_n) \cong \wedge^2(V_{q-n}) \oplus V_q^{n-q/2} \oplus V_{q/2}$$ as $K[C_q]$-modules.
\end{lause}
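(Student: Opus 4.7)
My plan is to adapt the strategy of Gow and Laffey's proof of Theorem \ref{thm:unipext} to the symmetric square. The essential additional input, available since $\operatorname{char} K = 2$, is the short exact sequence of $K[C_q]$-modules
\[
0 \longrightarrow V_n \xrightarrow{v \mapsto v \otimes v} S^2(V_n) \longrightarrow \wedge^2(V_n) \longrightarrow 0,
\]
in which the first map is additive because $(v+w) \otimes (v+w) - v \otimes v - w \otimes w = v \otimes w + w \otimes v$ vanishes in $S^2(V_n)$ in characteristic two, and the second is the natural surjection with kernel spanned by the squares $v \otimes v$. Fixing a basis $e_1, \ldots, e_n$ of $V_n$ with $u e_i = e_i + e_{i-1}$ (where $e_0 = 0$), the Frobenius image is the $n$-dimensional subspace $\langle e_1^2, \ldots, e_n^2 \rangle$, on which $u$ acts as $e_i^2 \mapsto e_i^2 + e_{i-1}^2$, so it is isomorphic to $V_n$ as a $K[C_q]$-module.

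With this setup, the first step is to record the explicit formula $e(e_i e_j) = e_{i-1} e_j + e_i e_{j-1} + e_{i-1} e_{j-1}$ for $e = u - 1$ acting on the standard basis $\{e_i e_j : 1 \le i \le j \le n\}$ of $S^2(V_n)$. Next, applying Theorem \ref{thm:unipext} yields $\wedge^2(V_n) \cong \wedge^2(V_{q-n}) \oplus V_q^{n - q/2 - 1} \oplus V_{3q/2 - n}$. Since $V_q$ is the unique indecomposable projective-injective $K[C_q]$-module, we have $\operatorname{Ext}^1_{K[C_q]}(V_q, V_n) = 0$, so the summand $V_q^{n-q/2-1}$ lifts through the Frobenius sequence to a direct summand of $S^2(V_n)$. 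The problem then reduces to showing that the remaining extension of $\wedge^2(V_{q-n}) \oplus V_{3q/2-n}$ by $V_n$ decomposes as $\wedge^2(V_{q-n}) \oplus V_q \oplus V_{q/2}$, of total dimension $\binom{q-n}{2} + 3q/2$.

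The final step, by analogy with Gow-Laffey's construction of an explicit generator of $V_{3q/2-n} \subset \wedge^2(V_n)$, is to exhibit specific generators of a $V_{q/2}$ summand and of the additional $V_q$ summand, and then identify a complementary $\wedge^2(V_{q-n})$. The main obstacle is that the isomorphism class of the extension is not determined by its dimension or its composition factors alone, so one must verify by direct computation that the specific extension produced by the Frobenius embedding is indeed $V_q \oplus V_{q/2}$ rather than some other $K[C_q]$-module. I expect the generator of $V_{q/2}$ to be a carefully chosen combination of Frobenius squares $e_i^2$ and cross-terms $e_i e_j$, whose $e$-orbit has length exactly $q/2$ and is linearly independent of the other summands; verifying this using the formula above should be the most delicate part of the argument.
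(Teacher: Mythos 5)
Your reduction is sound as far as it goes: the squaring map does give a short exact sequence $0 \to V_n^{[2]} \to S^2(V_n) \to \wedge^2(V_n) \to 0$ with $V_n^{[2]} \cong V_n$ in characteristic two, and since $V_q$ is projective and injective over $K[C_q]$ the summand $V_q^{n-q/2-1}$ of $\wedge^2(V_n)$ does lift to a direct summand of $S^2(V_n)$, leaving an extension $E$ of $\wedge^2(V_{q-n}) \oplus V_{3q/2-n}$ by $V_n$ to be identified. But the proof stops exactly where the real work begins. The isomorphism class of $E$ is not forced by anything you have established: an extension of $\wedge^2(V_{q-n}) \oplus V_{3q/2-n}$ by $V_n$ could a priori split entirely, or combine in other ways, and you acknowledge this yourself ("I expect the generator of $V_{q/2}$ to be a carefully chosen combination\dots verifying this\dots should be the most delicate part"). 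No candidate generators are written down, no computation of orbit lengths is carried out, and no argument is given that a complementary copy of $\wedge^2(V_{q-n})$ exists inside $E$. As it stands this is a plausible plan with the decisive step missing, not a proof.

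For comparison, the paper avoids analyzing this extension altogether. It argues by induction on $n$ using the index-two subgroup $H \le G = C_q$: for $n$ even it writes $V_n$ as an induced module $U_s^G$ and uses $S^2(U^G) \cong S^2(U)^G \oplus U^{\otimes G}$ (the symmetric-square analogue of Gow--Laffey's Lemma 5); for $n$ odd it computes the restriction $S^2(V_n)_H$ via $(V_n)_H = U_s \oplus U_{s+1}$ and invokes Gow--Laffey's Lemma 6, which says that a $K[G]$-module whose restriction has all odd-dimensional indecomposables (below $q/2$) with multiplicity at most one is determined by that restriction. That uniqueness lemma is what substitutes for the explicit generator hunt you would need; if you want to complete your approach instead, you must actually produce the $V_{q/2}$ and $V_q$ generators inside $E$ and verify their Jordan chain lengths and independence, which is a nontrivial computation comparable in difficulty to Gow and Laffey's construction for the exterior square.
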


Note that in Theorem \ref{thm:unipext} we have $q-n < q/2$, so the result can be applied repeatedly to find efficiently the decomposition of $\wedge^2(V_n)$ for any given $n$. Similarly $S^2(V_n)$ can be decomposed by applying Theorem \ref{thm:unipsym} together with Theorem \ref{thm:unipext}. 

The main part of this paper will be concerned with problem \textbf{Q2} about nilpotent linear maps. Here the most natural way to describe our results will be in terms of representations of Lie algebras. Let $\mathfrak{w}_q$ be the abelian $p$-Lie algebra over $K$ generated by a single nilpotent element $e \in \mathfrak{w}_q$ such that $e^{[q]} = 0$, so as a $K$-vector space $$\mathfrak{w}_q = \bigoplus_{0 \leq i < \alpha} \langle e^{[2^i]} \rangle.$$ There are a total of $q$ indecomposable restricted $\mathfrak{w}_q$-modules $W_1$, $\ldots$, $W_q$, where $\dim W_i = i$ and $e$ acts on $W_i$ as a single $i \times i$ nilpotent Jordan block. Denote $W_0 = 0$. In analogue with the unipotent case, question \textbf{Q2} is equivalent to the problem of decomposing $\wedge^2(V)$ and $S^2(V)$ into indecomposable summands for every restricted $\mathfrak{w}_q$-module $V$. 

The isomorphisms~\eqref{eq:additivext} and~\eqref{eq:additivesym} hold for $\mathfrak{w}_q$-modules as well, so we are reduced the problem of decomposing $W_m \otimes W_n$, $\wedge^2(W_n)$, and $S^2(W_n)$ into indecomposable summands. 
 
By the following result, one can calculate the decomposition of $W_m \otimes W_n$ using Theorem \ref{thm:tensordecompchar2}. This is a special case of a result of Fossum \cite{Fossum} on formal group laws, alternatively a short proof can be found in \cite[Corollary 5 (a)]{NormanTwoRelated}.

\begin{prop}[{\cite[Section III]{Fossum}}]\label{prop:uninilconnection}
Let $0 < n,m \leq q$ and suppose that we have $V_m \otimes V_n \cong V_{r_1} \oplus \cdots \oplus V_{r_t}$ as $K[C_q]$-modules for some $r_1, \ldots, r_t > 0$. Then $W_m \otimes W_n \cong W_{r_1} \oplus \cdots \oplus W_{r_t}$ as $\mathfrak{w}_q$-modules.
\end{prop}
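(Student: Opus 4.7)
The plan is to identify both $K[C_q]$ and the restricted enveloping algebra $U_0(\mathfrak{w}_q)$ with the single algebra $A = K[x]/(x^q)$, sending $g - 1 \mapsto x$ in the group case and $e \mapsto x$ in the Lie algebra case. (The first isomorphism uses that $g^q - 1 = (g-1)^q$ in characteristic two, since $q$ is a power of $2$.) Under these identifications $V_n$ and $W_n$ become literally the same $A$-module $K[x]/(x^n)$, and the only difference between the two situations is the coproduct on $A$: the group case gives the multiplicative coproduct $x \mapsto x \otimes 1 + 1 \otimes x + x \otimes x$, while the restricted Lie algebra case gives the primitive coproduct $x \mapsto x \otimes 1 + 1 \otimes x$. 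The content of the proposition is that these two coproducts produce the same multiset of indecomposable summands.

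Writing $X = x \otimes 1$ and $Y = 1 \otimes x$ on the common underlying vector space $M = K[X,Y]/(X^m, Y^n)$, this becomes the assertion that the two commuting-nilpotent sums
\[ T_W = X + Y \qquad \text{and} \qquad T_V = X + Y + XY \]
have the same Jordan type as endomorphisms of $M$. Since the Jordan type of a nilpotent operator on a finite-dimensional space is determined by the sequence $\dim \ker T^k$, it is enough to verify that $\dim \ker T_W^k = \dim \ker T_V^k$ for every $k \geq 1$.

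In characteristic two the identities $T_W^{2^i} = X^{2^i} + Y^{2^i}$ and $T_V^{2^i} = X^{2^i} + Y^{2^i} + X^{2^i} Y^{2^i}$ make each of $T_W^k$ and $T_V^k$ expand into an explicit, sparse polynomial in $X, Y$ after writing $k$ in binary. My first approach would be to compare the two kernels combinatorially in the monomial basis $\{X^i Y^j\}_{i<m,\, j<n}$ of $M$. A second approach, which I suspect is cleaner, is an induction on $\alpha$ (where $q = 2^\alpha$): restricting along the inclusions $\mathfrak{w}_{q/2} \hookrightarrow \mathfrak{w}_q$ and $C_{q/2} \hookrightarrow C_q$ and using Frobenius twists to peel off the top binary digit should reduce the statement at level $q$ to the corresponding one at level $q/2$, with the case $\alpha = 1$ handled by direct inspection.

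The main obstacle is that $T_W$ and $T_V$ differ by the genuinely nonzero term $XY$ and are not conjugate by any obvious linear automorphism of $M$, so no intertwining operator can be written down explicitly. The equality of Jordan types must therefore be extracted combinatorially from the action of $T^k$ on monomials, and I would expect to adapt the elementary argument indicated in Norman \cite[Corollary 5(a)]{NormanTwoRelated} rather than invoke the formal group law machinery of Fossum \cite{Fossum}, since the former is designed precisely to sidestep the absence of an intertwiner.
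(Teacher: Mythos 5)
Your reduction is set up correctly and is in fact the same reduction that underlies the reference the paper points to for a ``short proof'' (Norman, \emph{On Jordan bases for two related linear mappings}): identifying both $K[C_q]$ and the restricted enveloping algebra of $\mathfrak{w}_q$ with $K[x]/(x^q)$, the claim becomes the assertion that $T_W = X+Y$ and $T_V = X+Y+XY$ have the same Jordan type on $K[X,Y]/(X^m,Y^n)$, and your identities $T_W^{2^i}=X^{2^i}+Y^{2^i}$ and $T_V^{2^i}=X^{2^i}+Y^{2^i}+X^{2^i}Y^{2^i}$ are correct. (Note the paper itself offers no proof of this proposition; it simply cites Fossum and Norman.)

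However, what you have written is a plan rather than a proof: the equality $\dim\ker T_W^k=\dim\ker T_V^k$ for all $k$ \emph{is} the entire content of the proposition, and neither of your two proposed routes to it is carried out. The first route (comparing kernels in the monomial basis after expanding $k$ in binary) is exactly the nontrivial combinatorial work done in Norman's paper, and you give no indication of how the comparison would go; the two operators' powers are genuinely different polynomials in $X,Y$, and the equality of kernel dimensions is not apparent from inspection. The second route has a more structural problem: restricting a $K[C_q]$-module to the index-two subgroup $C_{q/2}$ (or a $\mathfrak{w}_q$-module to $\mathfrak{w}_{q/2}$) does \emph{not} determine its isomorphism type in general --- this is precisely why the paper needs the extra hypotheses of Lemma~\ref{lemma:uniquerestriction} on the multiplicities of odd-dimensional summands before it can argue by restriction. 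So ``peeling off the top binary digit'' does not reduce the statement at level $q$ to the statement at level $q/2$ without a substantial additional argument controlling what is lost under restriction. As it stands, the proof has a gap exactly where the mathematical difficulty lies; you would need either to reproduce Norman's kernel-dimension computation or to supply the missing uniqueness-of-lift argument in the inductive approach.
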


The analogue of Proposition \ref{prop:uninilconnection} fails for $\wedge^2(W_n)$ and $S^2(W_n)$. The following example was noted in \cite[p. 286]{Fossum}: we have $\wedge^2(V_4) \cong V_2 \oplus V_4$, but $\wedge^2(W_4) \cong W_3^2$. Furthermore, we have $S^2(V_3) \cong V_2 \oplus V_4$, but $S^2(W_3) \cong W_1^2 \oplus W_4$.

In our main results for $\wedge^2(W_n)$ and $S^2(W_n)$, we will give a closed formula for their decomposition into indecomposable summands. For this, we will need the following definition from \cite[p. 231]{GlasbyPraegerXiapart}.

\begin{maar}\label{def:consecutiveones} The \emph{consecutive-ones binary expansion} of an integer $n > 0$ is the alternating sum $n = \sum_{1 \leq i \leq r} (-1)^{i+1} 2^{\beta_i}$ such that $\beta_1 > \cdots > \beta_r \geq 0$ and $r$ is minimal.\end{maar}

For example, we have consecutive-ones binary expansions $3 = 2^2 - 2^0$, $4 = 2^2$, $5 = 2^3 - 2^2 + 2^0$, $6 = 2^3 - 2^1$, and $7 = 2^3 - 2^0$. Note that for any consecutive-ones binary expansion, we have $\beta_{r-1} > \beta_r + 1$ if $r > 1$.

Using the consecutive-ones binary expansion of $n$, Glasby, Praeger and Xia have given an explicit expression for the indecomposable summands of $V_n \otimes V_n$ and their multiplicities \cite[Theorem 15]{GlasbyPraegerXiapart}. By Proposition \ref{prop:uninilconnection}, this also gives us the decomposition of $W_n \otimes W_n$ into indecomposable summands. 

In Section \ref{section:jordanbasis}, we give a different proof of \cite[Theorem 15]{GlasbyPraegerXiapart} by constructing a Jordan basis for the action of $e$ on $W_n \otimes W_n$. This Jordan basis can be used to find Jordan bases for the action of $e$ on $\wedge^2(W_n)$ and $S^2(W_n)$ as well, allowing us to compute the indecomposable summands of $\wedge^2(W_n)$ and $S^2(W_n)$ explicitly. This leads to the following results, which will be proven in Section \ref{section:jordanbasis2}.

\begin{lause}\label{thm:mainthmextnilpotent}
Let $n > 0$ be an integer, with consecutive-ones binary expansion $n = \sum_{1 \leq i \leq r} (-1)^{i+1} 2^{\beta_i}$, where $\beta_1 > \cdots > \beta_r \geq 0$. For $1 \leq k \leq r$ with $\beta_k > 0$, define $d_k := 2^{\beta_k - 1} + \sum_{k < i \leq r} (-1)^{k+i} 2^{\beta_i}$. Then $$\wedge^2(W_n) \cong \bigoplus_{\substack{1 \leq k \leq r \\ \beta_k > 0}} W_{2^{\beta_k}-1}^{d_k}$$ as $\mathfrak{w}_q$-modules.\end{lause}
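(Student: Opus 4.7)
The plan is to derive the decomposition of $\wedge^2(W_n)$ from the explicit Jordan basis of $W_n \otimes W_n$ provided by Theorem \ref{thm:basistensorsquare}, using the characteristic-two description of the exterior square. Let $\tau \colon W_n \otimes W_n \to W_n \otimes W_n$ be the swap involution. Since $(v+w)^{\otimes 2} = v^{\otimes 2} + v \otimes w + w \otimes v + w^{\otimes 2}$, in characteristic two the subspace $\langle v \otimes v : v \in W_n \rangle$ defining the exterior square coincides with the $\tau$-invariant subspace $(W_n \otimes W_n)^\tau$. Consequently
$$\wedge^2(W_n) \cong (W_n \otimes W_n)/(W_n \otimes W_n)^\tau,$$
and since $(1+\tau)^2 = 0$, this quotient is isomorphic as a $\mathfrak{w}_q$-module to $\operatorname{im}(1+\tau) \subseteq W_n \otimes W_n$. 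The problem thus reduces to understanding the $\tau$-action on the Jordan basis of $W_n \otimes W_n$.

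Next I would study how $\tau$ interacts with the indecomposable summands of $W_n \otimes W_n$. Because $\tau$ commutes with $\mathfrak{w}_q$, one can choose a decomposition $W_n \otimes W_n = \bigoplus_j U_j$ into indecomposables in which $\tau$ either preserves a summand $U_j$ or interchanges two summands $U_j$, $U_{j'}$ of the same isomorphism type. On a $\tau$-stable $U_j \cong W_r$, the restriction $\tau|_{U_j}$ is an involution in the centralizer of $e|_{U_j}$, hence equals either $1$ or $1 + c\,e^{r-1}$ for some $c \in K^\times$. Under the isomorphism $\wedge^2(W_n) \cong \operatorname{im}(1+\tau)$, a $\tau$-swapped pair $W_r \oplus W_r$ contributes its diagonal copy of $W_r$; a $\tau$-fixed summand with $\tau = 1$ contributes nothing; and a $\tau$-fixed summand with $\tau = 1 + c\,e^{r-1}$ and $c \ne 0$ contributes a single copy of $W_1$, namely the image of $c\,e^{r-1}$.

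The principal step is then to read off the $\tau$-structure from the explicit basis of Theorem \ref{thm:basistensorsquare}. I expect that basis to be presented in a manifestly $\tau$-equivariant form, so that the swapped pairs and the $\tau$-fixed summands can be identified directly from the basis vectors. The decomposition of $W_n \otimes W_n$ from \cite[Theorem 15]{GlasbyPraegerXiapart} features summands of sizes $2^{\beta_k}$ and $2^{\beta_k} - 1$ governed by the consecutive-ones binary expansion of $n$; accordingly, the multiplicities $d_k$ in Theorem \ref{thm:mainthmextnilpotent} should emerge as the count of $\tau$-swapped pairs of $W_{2^{\beta_k}-1}$-summands, whereas the even-sized summands $W_{2^{\beta_k}}$ must all be $\tau$-fixed with $\tau = 1$, as otherwise they would contribute forbidden blocks to $\wedge^2(W_n)$.

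The main obstacle is the bookkeeping of $\tau$ on the explicit basis from Theorem \ref{thm:basistensorsquare}, in particular distinguishing $\tau = 1$ from $\tau = 1 + c\,e^{r-1}$ on a $\tau$-fixed summand, as this determines whether the summand is invisible or contributes a $W_1$ to $\wedge^2(W_n)$. Once this is settled, matching the combinatorial count of swapped pairs to the closed-form $d_k$ of Definition \ref{def:consecutiveones} should be a direct verification, most naturally by induction on the length $r$ of the consecutive-ones expansion, together with the dimension check $\binom{n}{2} = \sum_{k : \beta_k > 0} d_k\,(2^{\beta_k} - 1)$.
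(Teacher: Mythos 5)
Your reduction $\wedge^2(W_n)\cong (W_n\otimes W_n)/\ker(1+\tau)\cong\operatorname{im}(1+\tau)$ is correct in characteristic two, but the structural claim that everything afterwards rests on is false: $W_n\otimes W_n$ need \emph{not} admit a decomposition into $e$-indecomposables permuted by $\tau$, and even where a summand is $\tau$-stable the involution is not restricted to $1$ or $1+c\,e^{r-1}$ (an involution in $K[e]/(e^r)$ in characteristic two has the form $1+\sum_{i\geq\lceil r/2\rceil}a_ie^i$, whose image is some $W_j$ with $j\leq\lfloor r/2\rfloor$, not just $W_1$). Already $n=3$ shows the framework cannot produce the theorem: $W_3\otimes W_3\cong W_4^2\oplus W_1$ while $\wedge^2(W_3)\cong W_3$, and no combination of a swapped diagonal ($W_4$ or $W_1$), the image of an involution on a $\tau$-stable $W_4$ (at most $W_2$), or a trivial contribution can yield a $W_3$. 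For $n=2$ one checks directly that no $\tau$-permuted indecomposable decomposition of $W_2\otimes W_2\cong W_2^2$ exists at all: a swapped pair would force $\dim\ker(1+\tau)=2$ rather than $3$, and two $\tau$-stable copies would force $(1+\tau)w=c\,ew$ on a generator of the nontrivial copy, which is incompatible with $\operatorname{im}(1+\tau)=\langle v_1\otimes v_2+v_2\otimes v_1\rangle$. The underlying point is that $(e,1+\tau)$ makes $W_n\otimes W_n$ a module over an algebra of type $K[C_{2^\alpha}\times C_2]$, whose indecomposables do not restrict to one or two Jordan blocks over $K[e]$; correspondingly, the basis of Theorem \ref{thm:basistensorsquare} is not $\tau$-equivariant (for $s$ odd, $\tau$ does not permute the chains $\{e^jw_s\}$), so the bookkeeping you defer is not merely tedious but cannot be carried out in the form you describe.

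The paper's proof avoids $\tau$ entirely and goes through $S^2(W_n)$: in characteristic two there is a short exact sequence $0\to W_n^{[2]}\to S^2(W_n)\to\wedge^2(W_n)\to 0$, where $W_n^{[2]}$ is the $n$-dimensional span of the squares $v^2$ and is annihilated by $e$. The Jordan basis of $S^2(W_n)$ from Theorem \ref{thm:jordanbasissymsquare} shows $e$ has exactly $n$ Jordan blocks on $S^2(W_n)$ (Corollary \ref{corollary:fixedptsymsq}), so $W_n^{[2]}=S^2(W_n)^e$ and hence $\wedge^2(W_n)\cong S^2(W_n)/S^2(W_n)^e$; passing to this quotient lowers every Jordan block size by one, and the statement follows from Theorem \ref{thm:mainthmsymnilpotent}. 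To salvage your route you would have to analyse $\operatorname{im}(1+\tau)$ without assuming an equivariant splitting, which in effect means redoing the explicit Jordan-basis computation.
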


\begin{lause}\label{thm:mainthmsymnilpotent}
Let $n > 0$ be an integer, and let $\beta_1 > \cdots > \beta_r \geq 0$ and $d_k$ be as in Theorem \ref{thm:mainthmextnilpotent}. Then $$S^2(W_n) \cong W_1^{\lceil n/2 \rceil} \oplus \bigoplus_{\substack{1 \leq k \leq r \\ \beta_k > 0}} W_{2^{\beta_k}}^{d_k}$$ as $\mathfrak{w}_q$-modules.\end{lause}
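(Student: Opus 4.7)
The plan is to extract a Jordan basis of $S^2(W_n)$ from the Jordan basis of $W_n \otimes W_n$ constructed in Theorem~\ref{thm:basistensorsquare}, via the natural quotient $\pi\colon W_n \otimes W_n \to S^2(W_n)$. In characteristic $2$, $\ker\pi = \operatorname{Im}(1+\sigma)$, where $\sigma\colon v \otimes w \mapsto w \otimes v$ is the swap involution; since $\sigma$ commutes with $e$ and $\ker(1+\sigma) = \operatorname{Sym}^2(W_n)$, there is an isomorphism $\operatorname{Im}(1+\sigma) \cong (W_n \otimes W_n)/\operatorname{Sym}^2(W_n) \cong \wedge^2(W_n)$ of $\mathfrak{w}_q$-modules, giving a short exact sequence
$$
0 \to \wedge^2(W_n) \to W_n \otimes W_n \xrightarrow{\pi} S^2(W_n) \to 0.
$$

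The strategy is a dichotomy among the Jordan chains of $W_n \otimes W_n$. From the proof of Theorem~\ref{thm:mainthmextnilpotent}, for each summand $W_{2^{\beta_k}-1}$ of $\wedge^2(W_n)$ one obtains a Jordan chain in $W_n \otimes W_n$ of length $2^{\beta_k}$ whose bottom $2^{\beta_k}-1$ elements lie in $\operatorname{Im}(1+\sigma)$ and yield the corresponding summand of $\wedge^2(W_n)$ under the identification $\operatorname{Im}(1+\sigma) \cong \wedge^2(W_n)$. Call these \emph{type A} chains: on such a chain, $\pi$ annihilates the bottom $2^{\beta_k}-1$ elements, whereas the top element $x$ projects to a nonzero vector $\pi(x) \in S^2(W_n)$ satisfying $e\pi(x) = \pi(ex) = 0$, so each type A chain contributes a single $W_1$ summand to $S^2(W_n)$. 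The remaining chains in Theorem~\ref{thm:basistensorsquare}'s Jordan decomposition, call them \emph{type B}, intersect $\operatorname{Im}(1+\sigma)$ trivially; thus $\pi$ restricts to an isomorphism on each, and they contribute Jordan blocks of their full length to $S^2(W_n)$.

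By Theorem~\ref{thm:tensordecompchar2} combined with Proposition~\ref{prop:uninilconnection}, one computes $W_n \otimes W_n \cong W_1^{\epsilon} \oplus \bigoplus_{\beta_k > 0} W_{2^{\beta_k}}^{2 d_k}$, where $\epsilon = 1$ if $n$ is odd and $\epsilon = 0$ if $n$ is even. Subtracting the type A contribution $\bigoplus_{\beta_k > 0} W_{2^{\beta_k}}^{d_k}$, the type B chains must contribute $W_1^{\epsilon} \oplus \bigoplus_{\beta_k > 0} W_{2^{\beta_k}}^{d_k}$ to $S^2(W_n)$. Combined with the $W_1^{\sum_{\beta_k > 0} d_k}$ from type A, and using the elementary identity $\sum_{\beta_k > 0} d_k = \lfloor n/2 \rfloor$ (so that $\sum_{\beta_k > 0} d_k + \epsilon = \lceil n/2 \rceil$), we obtain the decomposition of $S^2(W_n)$ claimed. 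The main hurdle is the first step: verifying that Theorem~\ref{thm:basistensorsquare} furnishes a Jordan basis of $W_n \otimes W_n$ exhibiting the clean type A / type B dichotomy described above, with every chain either having its bottom $l-1$ elements in $\operatorname{Im}(1+\sigma)$ or else avoiding $\operatorname{Im}(1+\sigma)$ altogether.
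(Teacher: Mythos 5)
Your overall strategy---pushing the Jordan basis of $W_n \otimes W_n$ from Theorem \ref{thm:basistensorsquare} through $\pi$ and sorting the chains into those that collapse to $W_1$ and those that survive intact---is exactly the route the paper takes. However, the step you yourself flag as ``the main hurdle'' is precisely the substantive content of the proof, and as written your argument does not supply it. You need to know, for each chain $\{w_s, ew_s, \ldots, e^{2^{\beta_k}-1}w_s = z_s\}$, whether all of its elements below the top lie in $\ker \pi = \operatorname{Im}(1+\sigma)$ or none do, and you need to count how many chains of each length fall into each class. The paper settles this by a direct computation with the explicit formula for $w_s$: the dichotomy is governed by the parity of $s$. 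For $s$ even one has $\lfloor s/2\rfloor = \lceil s/2 \rceil$, so the $j$ and $-j$ terms in the defining sum for $w_s$ are swaps of one another and cancel under $\pi$ except for $j=0$; hence $\pi(w_s) = (v_{s/2+2^{\beta-1}})^2 \neq 0$ is a square, $e$ kills squares in $S^2(W_n)$, and the chain contributes a single $W_1$. For $s$ odd, $\pi(z_s) = (v_{(s+1)/2})^2 \neq 0$, so the whole chain survives. The counts then come out right because exactly half of the $s$ with $n_k > n-s \geq n_{k+1}$ are even (namely $(n_k-n_{k+1})/2 = d_k$ of them, when $\beta_k > 0$). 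None of this is visible from your description, which instead asserts the existence of the ``type A'' chains abstractly.

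Worse, the way you propose to identify and count the type A chains is circular relative to the paper's logical structure: you appeal to ``the proof of Theorem \ref{thm:mainthmextnilpotent}'' to produce, for each summand $W_{2^{\beta_k}-1}$ of $\wedge^2(W_n)$, a chain of length $2^{\beta_k}$ in $W_n \otimes W_n$ with its bottom $2^{\beta_k}-1$ elements in $\operatorname{Im}(1+\sigma)$. But Theorem \ref{thm:mainthmextnilpotent} is deduced in the paper \emph{from} Theorem \ref{thm:mainthmsymnilpotent}, via the exact sequence $0 \to W_n^{[2]} \to S^2(W_n) \to \wedge^2(W_n) \to 0$, so you cannot use it (or its proof) here without first giving an independent proof of the $\wedge^2$ decomposition. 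Your subtraction bookkeeping using $W_n \otimes W_n \cong W_1^{\epsilon} \oplus \bigoplus_{\beta_k>0} W_{2^{\beta_k}}^{2d_k}$ and the identity $\sum_{\beta_k>0} d_k = \lfloor n/2 \rfloor$ is correct and would close the argument once the parity dichotomy and the count of collapsing chains are established directly, as above; but as it stands the proposal has a genuine gap at its central step.
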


As a corollary of Theorem \ref{thm:mainthmextnilpotent} and Theorem \ref{thm:mainthmsymnilpotent}, we also get reciprocity theorems for the decomposition of $S^2(W_n)$ and $\wedge^2(W_n)$, analogously to Theorem \ref{thm:unipext} and Theorem \ref{thm:unipsym} above. The proofs will be given in Section \ref{section:jordanbasis2}.

\begin{lause}\label{thm:mainthmrecursivenilpotentext}
Suppose that $q/2 < n \leq q$. Then we have $$\wedge^2(W_n) \cong \wedge^2(W_{q-n}) \oplus W_{q-1}^{n-q/2}$$ as $\mathfrak{w}_q$-modules.
\end{lause}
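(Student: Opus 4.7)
The plan is to derive this recursion directly from the closed formula in Theorem \ref{thm:mainthmextnilpotent}, by comparing the summands it produces when applied to $n$ versus to $m := q - n$. The edge case $n = q$ is immediate: the consecutive-ones expansion is $q = 2^\alpha$ with $r=1$ and $d_1 = q/2$, so Theorem \ref{thm:mainthmextnilpotent} yields $\wedge^2(W_q) \cong W_{q-1}^{q/2}$, agreeing with $\wedge^2(W_0) \oplus W_{q-1}^{n-q/2}$. For the rest, assume $q/2 < n < q$, so $0 < m < q/2$, and write the consecutive-ones expansion of $m$ as $m = \sum_{i=1}^s (-1)^{i+1} 2^{\gamma_i}$; since $m < 2^{\alpha-1}$ one checks easily that $\gamma_1 \leq \alpha - 1$.

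The first and main step is to identify the consecutive-ones expansion of $n$. The natural candidate is
\[
n \;=\; q - m \;=\; 2^\alpha - 2^{\gamma_1} + 2^{\gamma_2} - \cdots + (-1)^s 2^{\gamma_s},
\]
which is automatically alternating with strictly decreasing exponents $\alpha > \gamma_1 > \cdots > \gamma_s$. I expect the principal obstacle to be proving that the length $s+1$ is minimal, so that this really is the consecutive-ones expansion of $n$. The argument I have in mind: for any alternating expansion $n = \sum_{i=1}^{r'} (-1)^{i+1} 2^{\beta_i}$, the tail $-2^{\beta_2} + 2^{\beta_3} - \cdots$ lies in $[-2^{\beta_2},\,0]$, giving the bounds $2^{\beta_1-1} \leq 2^{\beta_1} - 2^{\beta_2} \leq n \leq 2^{\beta_1}$. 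Combined with $q/2 < n < q$ this pins down $\beta_1 = \alpha$; but then $m = q - n = 2^{\beta_2} - 2^{\beta_3} + \cdots$ is an alternating expansion of $m$ of length $r'-1$, and minimality of $s$ forces $r' \geq s+1$.

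With the expansion of $n$ in hand (so $\beta_1 = \alpha$ and $\beta_{k+1} = \gamma_k$ for $1 \leq k \leq s$), I plug into the formula of Theorem \ref{thm:mainthmextnilpotent} and split the output into the $k=1$ contribution and the $k \geq 2$ contributions. A direct telescoping computation gives $d_1 = 2^{\alpha-1} - m = n - q/2$, producing the summand $W_{q-1}^{n - q/2}$. For $k = j+1$ with $j \geq 1$, reindexing yields $d_{j+1} = 2^{\gamma_j - 1} + \sum_{j < i \leq s}(-1)^{j+i} 2^{\gamma_i}$, which is exactly the coefficient that Theorem \ref{thm:mainthmextnilpotent} attaches to $W_{2^{\gamma_j}-1}$ in the decomposition of $\wedge^2(W_m)$; the side conditions $\beta_k > 0$ and $\gamma_{k-1} > 0$ also coincide. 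Hence the $k \geq 2$ terms assemble exactly into $\wedge^2(W_{q-n})$, yielding the claimed isomorphism.
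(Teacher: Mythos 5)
Your proposal is correct and follows essentially the same route as the paper: apply Theorem \ref{thm:mainthmextnilpotent} to both $n$ and $q-n$ after observing that (for $q/2<n\leq q$) the consecutive-ones expansion of $n$ is $2^{\alpha}$ followed by the negated expansion of $q-n$, so the $k=1$ term gives $W_{q-1}^{n-q/2}$ and the remaining terms assemble into $\wedge^2(W_{q-n})$. The paper leaves the identification of the two expansions implicit, whereas you verify the minimality explicitly; this is a welcome detail but not a different method.
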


\begin{lause}\label{thm:mainthmrecursivenilpotentsym}
Suppose that $q/2 < n \leq q$. Then we have $$S^2(W_n) \cong S^2(W_{q-n}) \oplus W_q^{n-q/2} \oplus W_1^{n-q/2}$$ as $\mathfrak{w}_q$-modules.
\end{lause}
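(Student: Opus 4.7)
The plan is to derive this directly from the closed formula in Theorem \ref{thm:mainthmsymnilpotent} applied to both sides. The key observation is a clean relationship between the consecutive-ones binary expansions of $n$ and $q-n$ when $q/2 < n \leq q$. Since $q = 2^{\alpha}$ and $q/2 < n \leq q$, the leading term of the expansion $n = \sum_{1 \leq i \leq r} (-1)^{i+1} 2^{\beta_i}$ must satisfy $\beta_1 = \alpha$, so $2^{\beta_1} = q$. Writing $q - n = \sum_{2 \leq i \leq r} (-1)^{i} 2^{\beta_i}$ and reindexing by $j = i-1$ exhibits $q - n = \sum_{1 \leq j \leq r-1}(-1)^{j+1} 2^{\beta_{j+1}}$ as a consecutive-ones binary expansion: the strict inequalities among the exponents and the minimality condition $\beta_{r-1} > \beta_r + 1$ are inherited from the corresponding expansion of $n$. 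The boundary case $n = q$ (where $r=1$) must be verified separately but follows directly from the formula.

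Next, I would apply Theorem \ref{thm:mainthmsymnilpotent} to both $S^2(W_n)$ and $S^2(W_{q-n})$. Let $d_k$ denote the multiplicities attached to $n$ and $d'_j$ the multiplicities attached to $q-n$. A short reindexing computation, substituting the relation $\beta'_j = \beta_{j+1}$ into the formula $d'_j = 2^{\beta'_j - 1} + \sum_{j < i \leq r-1}(-1)^{j+i}2^{\beta'_i}$, yields $d'_{k-1} = d_k$ for each $k$ with $2 \leq k \leq r$. Consequently every summand $W_{2^{\beta_k}}^{d_k}$ with $k \geq 2$ appearing in the decomposition of $S^2(W_n)$ is matched by an identical summand in $S^2(W_{q-n})$, and these cancel between the two sides.

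It remains to identify the $k=1$ contribution and the discrepancy between the $W_1$ multiplicities. A direct computation gives
\[
d_1 = 2^{\beta_1 - 1} + \sum_{2 \leq i \leq r}(-1)^{i+1} 2^{\beta_i} = \tfrac{q}{2} - (q - n) = n - \tfrac{q}{2},
\]
which produces the summand $W_q^{n-q/2}$. Since $q$ is even, $n$ and $q-n$ have the same parity, so $\lceil n/2 \rceil - \lceil (q-n)/2 \rceil = n - q/2$ in both parity cases, producing $W_1^{n-q/2}$. Adding these to $S^2(W_{q-n})$ yields exactly $S^2(W_n)$, as claimed.

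The proof is thus purely arithmetic, with all substantive representation-theoretic work already done in Theorem \ref{thm:mainthmsymnilpotent}; the only real obstacle is careful bookkeeping to confirm that the consecutive-ones expansion of $q - n$ is obtained from that of $n$ simply by deleting the leading term $2^{\alpha}$, which is why the recursion takes such a clean form.
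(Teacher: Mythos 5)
Your proposal is correct and follows essentially the same route as the paper: the paper's proof of this theorem simply invokes the argument of Theorem \ref{thm:mainthmrecursivenilpotentext}, namely that the consecutive-ones expansion of $q-n$ is the truncation of that of $n$ (so the multiplicities $d_k$ for $k\geq 2$ coincide), that $d_1 = n - q/2$, and then reads off the difference from the closed formula of Theorem \ref{thm:mainthmsymnilpotent}. Your additional check that $\lceil n/2\rceil - \lceil (q-n)/2\rceil = n - q/2$ and the minimality argument for the truncated expansion are exactly the bookkeeping the paper leaves implicit.
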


\begin{remark}As a corollary of Theorems \ref{thm:unipext} - \ref{thm:unipsym}, one can also give explicit expressions for the decompositions of $\wedge^2(V_n)$ and $S^2(V_n)$ in terms of the consecutive-ones binary expansion of $n$. We omit the details, but the main observation to make is that if $q = 2^{\beta_1}$ is the first term in the consecutive-ones binary expansion of $n$, then $q/2 < n \leq q$.\end{remark}




To illustrate Theorems \ref{thm:unipext} -- \ref{thm:unipsym} and \ref{thm:mainthmextnilpotent} -- \ref{thm:mainthmrecursivenilpotentsym}, some examples are provided in Table \ref{table:examples}.

\begin{table}[!htbp]
\centering
\caption{Exterior and symmetric squares of $V_n$ and $W_n$.}\label{table:examples}
\begin{tabular}{| c | l | l | l | l |}
\hline
  $n$ & $\wedge^2(V_n)$ & $S^2(V_n)$ & $\wedge^2(W_n)$ & $S^2(W_n)$ \\
	\hline
  $1$ & $0$                               & $V_1$                            & $0$                     & $W_1$ \\
	$2$ & $V_1$                             & $V_1 \oplus V_2$                 & $W_1$                   & $W_1 \oplus W_2$ \\
  $3$ & $V_3$                             & $V_2 \oplus V_4$                 & $W_3$                   & $W_1^2 \oplus W_4$ \\
	$4$ & $V_2 \oplus V_4$                  & $V_2 \oplus V_4^2$               & $W_3^2$                 & $W_1^2 \oplus W_4^2$ \\
	$5$ & $V_3 \oplus V_7$                  & $V_3 \oplus V_4 \oplus V_8$      & $W_3 \oplus W_7$        & $W_1^3 \oplus W_4 \oplus W_8$ \\
	$6$ & $V_1 \oplus V_6 \oplus V_8$       & $V_1 \oplus V_4 \oplus V_8^2$    & $W_1 \oplus W_7^2$      & $W_1^3 \oplus W_2 \oplus W_8^2$ \\
	$7$ & $V_5 \oplus V_8^2$                & $V_4 \oplus V_8^3$               & $W_7^3$                 & $W_1^4 \oplus W_8^3$ \\
	$8$ & $V_4 \oplus V_8^3$                & $V_4 \oplus V_8^4$               & $W_7^4$                 & $W_1^4 \oplus W_8^4$ \\
	$9$ & $V_5 \oplus V_8^2 \oplus V_{15}$  & $V_5 \oplus V_8^3 \oplus V_{16}$ & $W_7^3 \oplus W_{15}$ & $W_1^5 \oplus W_8^3 \oplus W_{16}$ \\
\hline
\end{tabular}

\end{table}

\section{Decomposition of $S^2(V_n)$}\label{section:unipsym}

In this section, we will prove Theorem \ref{thm:unipsym}, which gives a recursive description for the decomposition of $S^2(V_n)$ into indecomposable summands. As mentioned in the introduction, the proof follows essentially the same steps as the proof of Theorem \ref{thm:unipext} in \cite{GowLaffey}.

Let $G$ be a cyclic $2$-group of order $q > 1$ with generator $g$, and let $H = \langle g^2 \rangle$ be the unique subgroup of index $2$ in $G$. As in the introduction, we set $V_0 = 0$ and denote the indecomposable $K[G]$-modules by $V_1$, $\ldots$, $V_q$. Similarly we will set $U_0 = 0$ and denote the indecomposable $K[H]$-modules by $U_1$, $\ldots$, $U_{q/2}$, where $\dim U_i = i$ for all $1 \leq i \leq q/2$.

The restriction of a $K[G]$-module $V$ to $H$ will be denoted by $V_H$. For a $K[H]$-module $U$, we denote the induced module of $U$ from $H$ to $G$ by $U^G := K[G] \otimes_{K[H]} U$. A basic fact we will use in this section without mention is that $$U_s^G \cong V_{2s}$$ for all $1 \leq s \leq q/2$. This follows either by a direct calculation or by Green's indecomposability theorem \cite[Theorem 8]{GreenIndecomposables}.

We will denote the tensor induced module of $U$ from $H$ to $G$ by $U^{\otimes G}$ \cite[{\S 13}]{CurtisReinerVol1}. In our setting, we have $U^{\otimes G} = U \otimes U$ as a $K[H]$-module, and the action of $G$ on $U^{\otimes G}$ is defined by $$g \cdot (v \otimes w) = g^2 w \otimes v$$ for all $v, w \in U$.

We begin with a series of lemmas which are similar (or the same) as those in \cite{GowLaffey}. After this we will proceed with the proof of Theorem \ref{thm:unipsym}.

\begin{lemma}[{\cite[Lemma 5]{GowLaffey}}]\label{lemma:inducedextsquare}
Let $U$ be a $K[H]$-module. Then $$\wedge^2(U^G) \cong \wedge^2(U)^G \oplus U^{\otimes G}$$ as $K[G]$-modules.
\end{lemma}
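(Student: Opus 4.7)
The plan is to write $U^G = U \oplus gU$ as a $K$-vector space (identifying $u \in U$ with $1\otimes u$ and $gu \in gU$ with $g\otimes u$), so that under the $G$-action, $g$ sends $u \in U$ to $gu \in gU$ and sends $gu \in gU$ back to $g^2 u \in U$. Then I would apply the standard decomposition~\eqref{eq:additivext} to obtain
\[
\wedge^2(U^G) \;\cong\; \wedge^2(U) \;\oplus\; (U \otimes gU) \;\oplus\; \wedge^2(gU)
\]
as $K$-vector spaces (or as $K[H]$-modules, since $H$ preserves each summand). The goal is then to recognize the first and third summands together as a copy of $\wedge^2(U)^G$, and the middle summand as a copy of $U^{\otimes G}$.

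For the first part, the element $g$ carries $u_1 \wedge u_2 \in \wedge^2(U)$ to $gu_1 \wedge gu_2 \in \wedge^2(gU)$ and carries $gw_1 \wedge gw_2 \in \wedge^2(gU)$ to $g^2 w_1 \wedge g^2 w_2 \in \wedge^2(U)$. Since $H$ acts on $\wedge^2(U)$ via its natural action and $g$ interchanges the two summands with the correct twist, this is precisely the description of $\wedge^2(U)^G = K[G]\otimes_{K[H]} \wedge^2(U)$.

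For the middle summand, identify $U\otimes gU$ with $U\otimes U$ via $u\otimes gv \mapsto u\otimes v$. A direct computation gives $g\cdot(u\wedge gv) = gu \wedge g^2 v$, which lies in the $gU \otimes U$ direction of the decomposition; the key step is then to use the antisymmetry $x\wedge y = y\wedge x$, valid in characteristic two, to rewrite this as $g^2 v \wedge gu \in U \otimes gU$. Under the identification, this becomes $g\cdot(u\otimes v) = g^2 v \otimes u$, which matches the defining formula for the tensor induced module $U^{\otimes G}$ given just before the lemma. The main thing to be careful about is the bookkeeping of this swap and the characteristic-two sign cancellation, but once that is in place the isomorphism is immediate, and combining the two identifications yields the desired decomposition.
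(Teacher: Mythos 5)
Your proof is correct and follows essentially the same route as the source: the paper cites this lemma from Gow--Laffey without reproving it, and the argument there (mirrored in the paper's proof of Lemma~\ref{lemma:inducedsymsquare}) uses exactly your decomposition into the ``diagonal'' part $\wedge^2(U)\oplus\wedge^2(gU)\cong\wedge^2(U)^G$ and the ``cross'' part $U\otimes gU\cong U^{\otimes G}$. Your attention to the characteristic-two sign in identifying the cross term with the tensor induced module is the right point to be careful about, and it checks out.
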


\begin{lemma}\label{lemma:inducedsymsquare}
Let $U$ be a $K[H]$-module. Then $$S^2(U^G) \cong S^2(U)^G \oplus U^{\otimes G}$$ as $K[G]$-modules.
\end{lemma}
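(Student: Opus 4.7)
The plan is to follow the proof of Lemma \ref{lemma:inducedextsquare} essentially verbatim, replacing $\wedge^2$ by $S^2$ throughout. Write $U^G = U \oplus gU$ as a $K[H]$-module, identifying $U$ with $1 \otimes U$ and $gU$ with $g \otimes U$ inside $K[G] \otimes_{K[H]} U$. The standard additive identity $S^2(A \oplus B) \cong S^2(A) \oplus (A \otimes B) \oplus S^2(B)$ then gives
$$S^2(U^G) \cong S^2(U) \oplus (U \otimes gU) \oplus S^2(gU)$$
as $K[H]$-modules, and the goal is to package these three $H$-summands into the two $K[G]$-summands $S^2(U)^G$ and $U^{\otimes G}$ claimed in the lemma.

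First I would verify that $S^2(U) \oplus S^2(gU)$ is a $K[G]$-submodule isomorphic to $S^2(U)^G$. This is immediate: $g$ sends the symmetric product $v \cdot w \in S^2(U)$ to $gv \cdot gw \in S^2(gU)$ and vice versa (using $g^2 \in H$ acting on $U$), so $g$ simply interchanges the two summands, and the resulting $K[G]$-module is by construction the induced module $S^2(U)^G$.

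For the off-diagonal piece, I would define the candidate isomorphism $\varphi \colon U^{\otimes G} \to U \otimes gU$ by $\varphi(v \otimes w) = v \cdot gw$, where $v \cdot gw$ denotes the symmetric product in $S^2(U^G)$. For $h \in H$, commutativity of $G$ gives $\varphi(h(v \otimes w)) = hv \cdot g(hw) = hv \cdot h(gw) = h\varphi(v \otimes w)$. The decisive check is the action of $g$: by definition of the tensor-induced module, $g \cdot (v \otimes w) = g^2 w \otimes v$ in $U^{\otimes G}$, so $\varphi(g(v \otimes w)) = g^2 w \cdot gv$, while $g\varphi(v \otimes w) = gv \cdot g^2 w$; these coincide inside $S^2(U^G)$ by symmetry of the product. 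Bijectivity of $\varphi$ is clear on dimensions.

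I do not anticipate any substantive obstacle. The only place the proof of Lemma \ref{lemma:inducedextsquare} exploits antisymmetry is in this final commutation step, which amounts to absorbing a sign that is irrelevant in characteristic two anyway, so the exterior-square argument carries over line by line to the symmetric-square setting.
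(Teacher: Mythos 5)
Your proof is correct and takes essentially the same approach as the paper: your two pieces $S^2(U)\oplus S^2(gU)$ and $U\otimes gU$ are exactly the subspaces $Z_1$ (spanned by $(1\otimes v)(1\otimes w)$ and $(g\otimes v)(g\otimes w)$) and $Z_2$ (spanned by $(1\otimes v)(g\otimes w)$) used in the paper's proof, which defers the verification to the argument of Gow and Laffey. Your explicit check of the $g$-action on the off-diagonal summand, using $g\cdot(v\otimes w)=g^2w\otimes v$ and commutativity of the symmetric product, is precisely the computation being alluded to there.
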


\begin{proof}Let $Z_1$ be the subspace of $S^2(U^G)$ spanned by $(1 \otimes v)(1 \otimes w)$ and $(g \otimes v)(g \otimes w)$ for $v, w \in U$, and let $Z_2$ be the subspace spanned by $(1 \otimes v)(g \otimes w)$ for $v, w \in U$. We have $S^2(U^G) = Z_1 \oplus Z_2$. Arguing as in \cite[proof of Lemma 5]{GowLaffey}, we see that $Z_1$ and $Z_2$ are $G$-submodules, with $Z_1 \cong S^2(U)^G$ and $Z_2 \cong U^{\otimes G}$.\end{proof}


\begin{lemma}[{\cite[Lemma 6]{GowLaffey}}]\label{lemma:uniquerestriction}
Let $V$ be a $K[G]$-module with $V_H \cong \bigoplus_{1 \leq j \leq q/2} U_j^{r_j}$. Suppose that for all odd $1 \leq j < q/2$, we have $r_j \in \{0,1\}$. Then the isomorphism type of $V$ is uniquely determined by $V_H$. 
\end{lemma}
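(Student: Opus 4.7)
The plan is to recover the multiplicities in $V \cong \bigoplus_{i=1}^{q} V_i^{s_i}$ directly from the restriction data $r_j$. The first step is to determine how each indecomposable $V_i$ restricts to $H$. On $V_i$, writing $g = 1 + n$ with $n$ the standard nilpotent Jordan block, we have $g^2 = 1 + n^2$ in characteristic two, from which a short calculation gives $V_{2s}|_H \cong U_s^{2}$ and $V_{2s+1}|_H \cong U_s \oplus U_{s+1}$ (with $U_0 := 0$). Hence
$$r_j = s_{2j-1} + 2 s_{2j} + s_{2j+1}, \qquad 1 \le j \le q/2,$$
where $s_{q+1} := 0$, and the problem reduces to solving this system uniquely for the $s_i$.

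To exploit the hypothesis, I abbreviate $b_s := s_{2s+1}$ for $0 \leq s < q/2$ and set $b_{q/2} := 0$, so that $r_j = b_{j-1} + 2 s_{2j} + b_j$. For odd $j$ with $1 \leq j < q/2$, the bound $r_j \leq 1$ combined with nonnegativity forces $s_{2j} = 0$ and $b_{j-1}, b_j \in \{0,1\}$. Since $q/2$ is even, the odd values $j = 1, 3, \ldots, q/2 - 1$ are exactly those for which the pairs $(j-1,j)$ tile $\{0, 1, \ldots, q/2 - 1\}$, so in fact every $b_s$ lies in $\{0, 1\}$. This pair-tiling observation is the step that I expect to require the most care, since it is precisely what makes the hypothesis (odd $j < q/2$ only) sufficient to bound every $b_s$.

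The remainder is a descending recursion: reducing $r_j = b_{j-1} + 2 s_{2j} + b_j$ modulo $2$ gives $b_{j-1} + b_j \equiv r_j \pmod{2}$, and starting from $b_{q/2} = 0$ this determines $b_{q/2-1}, b_{q/2-2}, \ldots, b_0$ modulo $2$. Because each $b_s \in \{0, 1\}$, they are then determined exactly; finally $s_{2j} = (r_j - b_{j-1} - b_j)/2$ recovers the even-indexed multiplicities, establishing that $V$ is determined up to isomorphism by $V_H$.
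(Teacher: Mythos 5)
Your proof is correct, and it is essentially the standard argument: the paper itself gives no proof of this lemma, deferring to Gow--Laffey, whose Lemma~6 is proved along the same lines (compute $V_{2s}|_H \cong U_s^2$ and $V_{2s+1}|_H \cong U_s \oplus U_{s+1}$ from $g^2 = 1+n^2$, write $r_j = s_{2j-1} + 2s_{2j} + s_{2j+1}$, and solve for the $s_i$ using the parity constraint). Your descending mod-$2$ recursion starting from $b_{q/2}=0$ does pin down every multiplicity once you know each $b_s \in \{0,1\}$. One caveat: the tiling step where you pass from ``$b_{j-1}+b_j\le 1$ for odd $j<q/2$'' to ``every $b_s\in\{0,1\}$'' uses that $q/2$ is even, i.e.\ $q\ge 4$. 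For $q=2$ the hypothesis of the lemma is vacuous and the conclusion genuinely fails ($V_1^2$ and $V_2$ restrict identically to the trivial subgroup $H$), so the lemma must be read with $q\ge 4$; this is harmless, since the paper only invokes it for odd $n>2$, where $q\ge 4$, but it is worth stating the restriction explicitly rather than leaving it implicit in the phrase ``since $q/2$ is even.''
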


\begin{lemma}\label{lemma:unips2multipl}
Let $n > 1$ and suppose that Theorem \ref{thm:unipsym} holds for $n$. Write $S^2(V_n) \cong \bigoplus_{j \geq 1} V_j^{r_j}$, where $r_j \geq 0$. Then the following statements hold:
\begin{enumerate}[\normalfont (i)]
\item $r_1 = 1$ if $n \equiv 2 \mod{4}$, and $r_1 = 0$ otherwise.
\item Let $j > 1$ be an odd integer. Then $r_j = 0$ if $n$ is even, and $r_j \in \{0,1\}$ if $n$ is odd.
\end{enumerate} 
\end{lemma}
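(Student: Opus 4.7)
The plan is to expand $S^2(V_n)$ via the decomposition supplied by Theorem~\ref{thm:unipsym} and track where odd-dimensional summands can arise. The case $q = 2$ (forcing $n = 2$) is immediate, since $S^2(V_2) \cong V_1 \oplus V_2$. For $q \geq 4$, the summands $V_q$ and $V_{q/2}$ of
\[
S^2(V_n) \cong \wedge^2(V_{q-n}) \oplus V_q^{n-q/2} \oplus V_{q/2}
\]
both have even dimension, so every indecomposable summand $V_j$ with $j$ odd must arise from $\wedge^2(V_{q-n})$.

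Setting $m = q - n$ with $0 \leq m < q/2$, the fact that $q \equiv 0 \pmod{4}$ gives $n \equiv m \pmod{2}$ and $n \equiv 2 \pmod{4} \iff m \equiv 2 \pmod{4}$. The lemma thus reduces to the following claim about the multiplicities $s_j$ in $\wedge^2(V_m) \cong \bigoplus_j V_j^{s_j}$: (a) $s_1 = 1$ if $m \equiv 2 \pmod{4}$, and $s_1 = 0$ otherwise; (b) for each odd $j > 1$, $s_j = 0$ if $m$ is even, and $s_j \in \{0,1\}$ if $m$ is odd.

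I would prove (a) and (b) by strong induction on $m$, using Theorem~\ref{thm:unipext}. The bases $m \in \{0,1,2,3\}$ are immediate from $\wedge^2(V_0) = \wedge^2(V_1) = 0$, $\wedge^2(V_2) \cong V_1$, and $\wedge^2(V_3) \cong V_3$. For $m \geq 4$, let $q'$ be the smallest power of $2$ with $m \leq q'$; then $q' \geq 4$ and Theorem~\ref{thm:unipext} gives
\[
\wedge^2(V_m) \cong \wedge^2(V_{q'-m}) \oplus V_{q'}^{m - q'/2 - 1} \oplus V_{3q'/2 - m},
\]
with $q' - m < q'/2 < m$. Here $V_{q'}$ is of even dimension and $V_{3q'/2-m}$ has the same parity as $m$ (since $q' \geq 4$ makes $3q'/2$ even). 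The congruence $q' - m \equiv -m \pmod{4}$ combined with the induction hypothesis applied to $\wedge^2(V_{q'-m})$ settles (a) and the even case of (b).

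The main subtlety is the bound $s_j \leq 1$ in (b) when $m$ is odd, which could fail only if $j = 3q'/2 - m$ and $V_j$ already occurs in $\wedge^2(V_{q'-m})$. I would rule this out by observing that $V_{q'-m}$, having dimension $q' - m < q'/2$, is naturally a $K[C_{q'/2}]$-module; consequently so is $\wedge^2(V_{q'-m})$, and every indecomposable summand of the latter has dimension at most $q'/2$. Since $m$ odd forces $m < q'$ and hence $3q'/2 - m > q'/2$, the block $V_{3q'/2 - m}$ cannot appear in $\wedge^2(V_{q'-m})$, and the induction closes.
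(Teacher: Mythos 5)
Your argument is correct, and its first step is exactly the paper's: for $n>2$ one writes $S^2(V_n)\cong \wedge^2(V_{q-n})\oplus V_q^{n-q/2}\oplus V_{q/2}$ via Theorem~\ref{thm:unipsym}, observes that $V_q$ and $V_{q/2}$ are even-dimensional, and reduces everything to the multiplicities of odd-dimensional summands in $\wedge^2(V_{q-n})$. The difference is in how that last fact is handled: the paper simply cites ``the observation in [GowLaffey, p.~670]'' for the statement that $\wedge^2(V_m)$ has $V_1$-multiplicity $1$ exactly when $m\equiv 2 \pmod 4$ and has each odd block $V_j$ ($j>1$) with multiplicity $0$ for $m$ even and at most $1$ for $m$ odd, whereas you prove this from scratch by strong induction on $m$ using Theorem~\ref{thm:unipext}. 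Your induction is sound: the terms $V_{q'}$ and (for $m$ even) $V_{3q'/2-m}$ are even-dimensional, the congruence $q'-m\equiv -m\pmod 4$ transports statement (a), and the only delicate point --- that for $m$ odd the block $V_{3q'/2-m}$ cannot also occur inside $\wedge^2(V_{q'-m})$ --- is correctly disposed of by noting that $V_{q'-m}$ is a $K[C_{q'/2}]$-module, so all summands of its exterior square have dimension at most $q'/2 < 3q'/2-m$. The net effect is a self-contained proof of the lemma, at the cost of about a page of induction that the paper avoids by citation; both routes are valid.
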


\begin{proof}For $n = 2$ we have $S^2(V_n) \cong V_2 \oplus V_1$ and clearly the claim holds. Suppose then that $n > 2$. We have $S^2(V_n) \cong \wedge^2(V_{q-n}) \oplus V_{q}^{n-q/2} \oplus V_{q/2}$ as $K[G]$-modules, so the claim is immediate from the observation in \cite[p. 670]{GowLaffey}.\end{proof}

\begin{proof}[Proof of Theorem \ref{thm:unipsym}] By induction on $n$. For the base case $n = 2$ an easy calculation shows that $S^2(V_n) \cong V_2 \oplus V_1$, so the claim holds. Suppose then that $n > 2$. As in \cite[Proof of Theorem 2]{GowLaffey}, we split the proof into two cases.\\

\noindent \emph{Case 1: $n$ is even.} Write $n = 2s$. We have $V_n \cong U_s^G$, so by Lemma \ref{lemma:inducedsymsquare}\begin{equation}\label{eq:uniproof1}S^2(V_n) \cong S^2(U_s)^G \oplus U_s^{\otimes G}.\end{equation} By the induction assumption, we have $S^2(U_s) \cong \wedge^2(U_{q/2-s}) \oplus U_{q/2}^{s-q/2} \oplus U_{q/4}$ as $K[H]$-modules, and by \cite[Corollary 4]{GowLaffey} we have $U_s^{\otimes G} \cong U_{q/2 - s}^{\otimes G} \oplus V_q^{s-q/4}$ as $K[G]$-modules. Plugging these isomorphisms into~\eqref{eq:uniproof1}, we get $$S^2(V_n) \cong \wedge^2(U_{q/2-s})^G \oplus U_s^{\otimes G} \oplus V_{q}^{n-q/2} \oplus V_{q/2}$$ as $K[G]$-modules. Thus $S^2(V_n) \cong \wedge^2(V_{q-n}) \oplus V_{q}^{n-q/2} \oplus V_{q/2}$ by Lemma \ref{lemma:inducedextsquare}.\\

\noindent \emph{Case 2: $n$ is odd.} Write $n = 2s+1$. We have $(V_n)_H = U_s \oplus U_{s+1}$, so by~\eqref{eq:additivesym} $$S^2(V_n)_H \cong S^2(U_s) \oplus S^2(U_{s+1}) \oplus (U_s \otimes U_{s+1})$$ as $K[H]$-modules. Exactly one of $s$ and $s+1$ is odd, so by Lemma \ref{lemma:unips2multipl} and \cite[Corollary 2]{GowLaffey} we conclude that for all odd $1 \leq j < q/2$, the multiplicity of $U_j$ in $S^2(V_n)_H$ is either $0$ or $1$. Thus Lemma \ref{lemma:uniquerestriction} applies and $S^2(V_n)$ is uniquely determined up to isomorphism by the restriction $S^2(V_n)_H$. 

Applying the induction assumption and Theorem \ref{thm:tensordecompchar2} (ii), we get \begin{align*}S^2(U_s) &\cong \wedge^2(U_{q/2-s}) \oplus U_{q/2}^{s-q/4} \oplus U_{q/4} \\
S^2(U_{s+1}) &\cong \wedge^2(U_{q/2-s-1}) \oplus U_{q/2}^{s+1-q/4} \oplus U_{q/4} \\
U_s \otimes U_{s+1} &\cong (U_{q/2-s-1} \otimes U_{q/2-s}) \oplus U_{q/2}^{n-q/2}\end{align*} as $K[H]$-modules. Hence $S^2(V_n)_H \cong \wedge^2(U_{q/2-s} \oplus U_{q/2-s-1}) \oplus U_{q/2}^{2n-q} \oplus U_{q/4}^2,$ and so $S^2(V_n)$ has the same restriction to $H$ as $\wedge^2(V_{q-n}) \oplus V_{q}^{n-q/2} \oplus V_{q/2}$. Thus $S^2(V_n) \cong \wedge^2(V_{q-n}) \oplus V_{q}^{n-q/2} \oplus V_{q/2}$ as $K[G]$-modules.\end{proof}

\section{A Jordan basis for $W_n \otimes W_n$}\label{section:jordanbasis}

For this section, fix an integer $n > 0$, and let $q > 0$ be a power of $2$ such that $q \geq n$. Recall that for a nilpotent linear map $e: V \rightarrow V$, a \emph{Jordan chain} is a set of non-zero vectors $\{w, ew, \ldots, e^kw\}$, where $k \geq 0$ and $e^{k+1}w = 0$. A \emph{Jordan basis} for the action of $e$ on $V$ is a basis of $V$ which is a disjoint union of such Jordan chains. We denote $V^e := \{ v \in V : ev = 0 \}$

In this section, we give an explicit description of the indecomposable summands of $W_n \otimes W_n$, in terms of the consecutive-ones binary expansion of $n$. This is essentially due to Glasby, Praeger, and Xia \cite[Theorem 15]{GlasbyPraegerXiapart} --- see Proposition \ref{prop:uninilconnection}. We give a different proof by constructing a Jordan basis for the action of a generator $e$ of $\mathfrak{w}_q$ on $W_n \otimes W_n$ (Theorem \ref{thm:basistensorsquare}). 

Our construction of the Jordan basis is based on the following elementary lemma concerning Jordan chains of nilpotent linear maps. 

\begin{lemma}\label{lemma:jordanchains}
Let $e: V \rightarrow V$ be a nilpotent linear map. Let $\{ z_1, \ldots, z_t \}$ be a set of linearly independent vectors from $V^e$. Let $k_1, \ldots, k_t \geq 0$ be integers and $w_1, \ldots, w_t \in V$ such that $e^{k_i} w_i = z_i$ for all $1 \leq i \leq t$. Then $$\{ e^j w_i : 1 \leq i \leq t \text{ and } 0 \leq j \leq k_i \}$$ is a set of linearly independent vectors.
\end{lemma}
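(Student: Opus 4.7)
The plan is to assume a putative nontrivial linear dependence $\sum_{i,j} c_{i,j}\, e^j w_i = 0$ and apply a carefully chosen power of $e$ to reduce it to a nontrivial relation among the $z_i$, thereby contradicting their linear independence.

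Concretely, I would restrict attention to the set $I$ of indices $i$ for which at least one coefficient $c_{i,j}$ is nonzero. For each $i \in I$, let $a_i := \min\{j : c_{i,j} \neq 0\}$ and $\ell_i := k_i - a_i \geq 0$, and then set $L := \max_{i \in I} \ell_i$. The choice of $L$ is dictated by the following: $L$ is precisely the exponent one needs so that, after applying $e^L$, the surviving ``bottom'' term $c_{i,a_i}\, e^{a_i} w_i$ in at least one chain is pushed exactly up to $z_i$ rather than being killed.

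The core of the argument is the effect of applying $e^L$ to the dependence. For each term $c_{i,j}\, e^j w_i$ with $c_{i,j} \neq 0$ one has $j \geq a_i$, hence $L + j \geq L + a_i \geq \ell_i + a_i = k_i$, with equality if and only if $\ell_i = L$ and $j = a_i$. Since $e z_i = 0$ and $e^{k_i} w_i = z_i$, the vector $e^{L+j} w_i$ equals $z_i$ in the equality case and $0$ otherwise. The relation therefore collapses to $\sum_{i \in I,\, \ell_i = L} c_{i, a_i}\, z_i = 0$, and the linear independence of $\{z_1, \ldots, z_t\}$ forces $c_{i, a_i} = 0$ for each such $i$, contradicting the defining property of $a_i$.

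There is no serious obstacle here: this is a standard manipulation of Jordan chains for a single nilpotent operator. The only point requiring a little care is the double selection, namely a minimum over $j$ to identify the ``bottom'' of each chain's contribution, and then a maximum over $i$ to select those chains for which $e^L$ lands on $z_i$ rather than overshooting into zero, which is exactly what the pair $(a_i, L)$ encodes.
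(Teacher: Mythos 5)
Your proof is correct; the only difference from the paper is one of organization rather than substance. The paper argues by induction on $\dim V$: it passes to the quotient $V/V^e$, notes that the vectors $e^{k_i-1}w_i$ (for $k_i>0$) map to linearly independent elements of $(V/V^e)^e$, applies the induction hypothesis there to handle the part of the set with $j<k_i$, and then adjoins $\{z_1,\dots,z_t\}\subseteq V^e$ at the end. You instead run a direct argument by contradiction, applying the single power $e^L$ with $L=\max_{i\in I}(k_i-a_i)$ to a putative nontrivial dependence. Your bookkeeping is sound: for $c_{i,j}\neq 0$ one has $L+j\geq k_i$, with equality precisely when $\ell_i=L$ and $j=a_i$; the terms with strict inequality die because $ez_i=0$; the surviving index set is nonempty by the choice of $L$; and the independence of the $z_i$ then contradicts $c_{i,a_i}\neq 0$. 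Your version avoids induction and is self-contained in one computation, while the paper's quotient formulation reflects the layer-by-layer way Jordan bases are assembled in the rest of Section 3; both are complete proofs of the same elementary fact.
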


\begin{proof}By induction on $\dim V$. There is nothing to prove when $\dim V = 0$, since in this case $t = 0$. Suppose then that $\dim V > 0$. 

Since $e^{k_i} w_i = z_i$ for all $1 \leq i \leq t$, the image of $\{ e^{k_i - 1} w_i : 1 \leq i \leq t \text{ and } k_i > 0\}$ in $V/V^e$ is linearly independent and lies in $(V/V^e)^e$. Thus by applying induction on $V/V^e$, it follows that the image of $S = \{ e^j w_i : 1 \leq i \leq t \text{ and } 0 \leq j < k_i \}$ in $V/V^e$ is linearly independent. From this we conclude that $$S \cup \{z_1, \ldots, z_t \} = \{ e^j w_i : 1 \leq i \leq t \text{ and } 0 \leq j \leq k_i \}$$ is a set of linearly independent vectors.\end{proof} 

For all that follows, we fix a generator $e$ of $\mathfrak{w}_q$ and let $v_1$, $\ldots$, $v_n$ be a basis of $W_n$ such that $ev_1 = 0$ and $ev_i = v_{i-1}$ for all $1 < i \leq n$. For convenience of notation, we define $v_j = 0$ for all $j \leq 0$ and $j > n$. 

The action of $e$ on $W_n \otimes W_n$ is given by $f \otimes \operatorname{id} + \operatorname{id} \otimes f$, where $f$ is the action of $e$ on $W_n$. Thus an application of the binomial theorem shows that for all integers $i,j \leq n$ and $k \geq 0$, we have \begin{equation}\label{eq:binomformula}e^k \cdot (v_i \otimes v_j) = \sum_{0 \leq t \leq k} \binom{k}{t} v_{i-t} \otimes v_{j-k+t}.\end{equation}

For $1 \leq s \leq n$, define $$z_s := \sum_{1 \leq i \leq s} v_i \otimes v_{s+1-i}.$$ It is clear that $e \cdot z_s = 0$ for all $1 \leq s \leq n$, and in fact we have the following.

\begin{lemma}\label{lemma:fixedpointstensor}
The set $\{ z_1, \ldots, z_n \}$ is a basis of $(W_n \otimes W_n)^e$.
\end{lemma}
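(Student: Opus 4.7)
The plan is to exploit the anti-diagonal grading $W_n \otimes W_n = \bigoplus_{d=2}^{2n} A_d$, where $A_d = \langle v_i \otimes v_j : i+j = d \rangle$. From $e(v_i \otimes v_j) = v_{i-1} \otimes v_j + v_i \otimes v_{j-1}$ we see that both summands land in $A_{d-1}$, so $e$ is homogeneous of degree $-1$ with respect to the grading, and therefore $(W_n \otimes W_n)^e = \bigoplus_d (A_d \cap (W_n \otimes W_n)^e)$. Since $z_s \in A_{s+1}$ and the coefficient of $v_1 \otimes v_s$ in $z_s$ is $1$, the vectors $z_1, \ldots, z_n$ lie in distinct nonzero graded pieces and are automatically linearly independent. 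Combined with the already noted fact that $e \cdot z_s = 0$, the task reduces to showing that $\dim(A_d \cap (W_n \otimes W_n)^e)$ equals $1$ for $2 \leq d \leq n+1$ and $0$ for $n+2 \leq d \leq 2n$.

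For the graded kernel computation, I would write a general element as $w = \sum_i c_i\, v_i \otimes v_{d-i}$, with $i$ ranging over $\max(1, d-n) \leq i \leq \min(n, d-1)$. A direct expansion of $ew$ and comparison of coefficients, using $\operatorname{char} K = 2$, yields ``interior'' relations $c_{i+1} = c_i$ between coefficients whose shifted indices stay in the rectangle $[1,n]^2$, together with ``boundary'' equations obtained from the coefficients of $v_n \otimes v_j$ and $v_i \otimes v_n$ in $ew$. For $2 \leq d \leq n+1$, the diagonal endpoints $(1, d-1)$ and $(d-1, 1)$ avoid the edge $i = n$ or $j = n$, so no boundary equations appear and the interior relations force all $c_i$ to be equal; thus $A_d \cap (W_n \otimes W_n)^e$ is one-dimensional, spanned by $z_{d-1}$. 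For $d \geq n+2$, the endpoints $(d-n, n)$ and $(n, d-n)$ both sit on the edge, the boundary equations read $c_{d-n} = 0$ and $c_n = 0$, and together with the interior propagation $c_i = c_{i+1}$ every $c_i$ vanishes.

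Summing the one-dimensional contributions for $d = 2, \ldots, n+1$ gives $\dim (W_n \otimes W_n)^e = n$, so the $n$ linearly independent vectors $z_1, \ldots, z_n$ form a basis. The only routine obstacle is the bookkeeping of which pairs $(i, d-i)$ hit the edge $i = n$ or $d-i = n$, since it is precisely these edge cases that produce the boundary equations killing the ``short'' anti-diagonals; otherwise the argument is essentially just expanding $ew$ and reading off the resulting linear system.
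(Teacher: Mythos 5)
Your proof is correct; the paper simply defers this to a ``straightforward calculation'' (citing Norman), and your anti-diagonal grading argument is a clean and complete way to carry it out. One phrasing slip worth fixing: for $d=n+1$ the endpoint $(1,d-1)=(1,n)$ does lie on the edge $j=n$, but the would-be boundary equation is the coefficient of $v_n\otimes v_{d-n-1}=v_n\otimes v_{0}=0$ in $ew$, so boundary equations genuinely appear only when $d\geq n+2$ and your dichotomy, hence the count $\dim (W_n\otimes W_n)^e=n$, stands.
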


\begin{proof}A straightforward calculation --- see for example \cite[Lemma 2]{Norman}.\end{proof}																																																														
																																																																	
Let $n = \sum_{1 \leq i \leq r} (-1)^{i+1} 2^{\beta_i}$ be the consecutive-ones binary expansion of $n$, where $\beta_1 > \cdots > \beta_r \geq 0$, and $\beta_{r-1} > \beta_r + 1$ if $r > 1$. Define $$n_k := \sum_{k \leq i \leq r} (-1)^{i+k} 2^{\beta_i}$$ for all $1 \leq k \leq r$, and set $n_{r+1} := 0$. Note that $n = n_1 > n_2 > \cdots > n_r > n_{r+1} = 0$. 

The rest of this section proceeds as follows. Consider $1 \leq s \leq n$ and let $1 \leq k \leq r$ be the unique integer such that $n_k > n-s \geq n_{k+1}$. Using the next two lemmas, we will construct $w_s \in W_n \otimes W_n$ such that $e^{2^{\beta_k}-1} w_s = z_s$. From this, an application of Lemma \ref{lemma:jordanchains} will give us a Jordan basis for the action of $e$ on $W_n \otimes W_n$.

\begin{lemma}\label{lemma:mod2b1calc}
Let $\beta = \beta_k > 0$ and $1 \leq s \leq n$ with $n-s \geq n_{k+1}$. Then there exists an integer $s \leq x \leq 2n-s$ such that $x \equiv 2^{\beta} \mod{2^{\beta+1}}$.
\end{lemma}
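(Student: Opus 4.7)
The plan is to translate the existence question into a residue calculation modulo $2^{\beta+1}$. The integers $x$ with $x \equiv 2^{\beta} \pmod{2^{\beta+1}}$ form an arithmetic progression of common difference $2^{\beta+1}$, and since the interval $[s, 2n-s]$ is symmetric about $n$, such an $x$ lies in it precisely when some integer congruent to $2^\beta$ modulo $2^{\beta+1}$ is within integer distance $n - s$ of $n$. The hypothesis $n - s \geq n_{k+1}$ reduces the task to showing that the distance from $n$ to the nearest integer $\equiv 2^\beta \pmod{2^{\beta+1}}$ is at most $n_{k+1}$.

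To see this, I would compute $n \bmod 2^{\beta+1}$ directly from the consecutive-ones expansion. For every $i < k$ one has $\beta_i > \beta$, hence $\beta_i \geq \beta + 1$, so $2^{\beta_i} \equiv 0 \pmod{2^{\beta+1}}$ and the high-order terms vanish. What remains, after factoring out $(-1)^{k+1}$, is
$$n \equiv \sum_{k \leq i \leq r}(-1)^{i+1} 2^{\beta_i} = (-1)^{k+1} n_k \pmod{2^{\beta+1}}.$$
The identity $n_k = 2^\beta - n_{k+1}$ is immediate from the definition of $n_k$, and the bound $0 \leq n_{k+1} < 2^\beta$ follows either from $n_{k+1} \leq 2^{\beta_{k+1}} \leq 2^{\beta - 1}$ when $k < r$, or from $n_{r+1} = 0$.

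Plugging in, a one-line case split on the parity of $k$ shows that the standard representative of $n$ in $\{0, 1, \ldots, 2^{\beta+1}-1\}$ is $n_k$ when $k$ is odd and $2^{\beta+1} - n_k$ when $k$ is even; in both cases its distance to $2^\beta$ on the number line is $2^\beta - n_k = n_{k+1}$. So there is an integer $x$ congruent to $2^\beta$ modulo $2^{\beta+1}$ with $|x - n| \leq n_{k+1} \leq n - s$, placing $x$ in $[s, 2n-s]$ as required. The main obstacle is purely the parity bookkeeping at this last step; no deeper idea is needed beyond the cancellation of the high-order terms and the identity $n_k + n_{k+1} = 2^\beta$.
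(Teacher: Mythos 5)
Your proof is correct, and it takes a genuinely different route from the paper's. The paper argues by a four-way case analysis: it splits on the parity of $k$ and, within each case, on whether $2^{\beta}$ occurs in the binary expansion of $s$, constructing a suitable $x$ explicitly each time (and using the observation that the claim is automatic once $n-s \geq 2^{\beta}$). You instead make a single computation of $n \bmod 2^{\beta+1}$: the terms with $i < k$ vanish modulo $2^{\beta+1}$, giving $n \equiv (-1)^{k+1} n_k$, and the identity $n_k + n_{k+1} = 2^{\beta}$ together with $0 \leq n_{k+1} \leq 2^{\beta_{k+1}} \leq 2^{\beta-1}$ shows that the residue class of $2^{\beta}$ modulo $2^{\beta+1}$ contains an element $x_0$ with $|x_0 - n| = n_{k+1} \leq n-s$; since $[s, 2n-s]$ is the interval of radius $n-s$ about $n$, this $x_0$ works. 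All the ingredients you invoke check out ($n_k = 2^{\beta}-n_{k+1}$ with $0 \leq n_{k+1} \leq 2^{\beta-1}$ follows from the paper's own observation that $n_1 > \cdots > n_r > n_{r+1}=0$ and $\beta_{k+1} < \beta_k$). Your argument is shorter and more conceptual — it isolates the real reason the lemma is true, namely that $n$ sits at distance exactly $n_{k+1}$ from the arithmetic progression $2^{\beta} + 2^{\beta+1}\mathbb{Z}$ — whereas the paper's version, though more laborious, produces the witness $x$ in closed form in each case without needing to discuss nearest representatives.
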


\begin{proof}First note that the claim holds if $n-s \geq 2^{\beta}$, since in this case the interval $[s, 2n-s]$ contains a complete set of representatives modulo $2^{\beta+1}$. This fact will be used throughout the proof, which we split into two cases:\\

\noindent \emph{Case 1: $k \equiv 0 \mod{2}$.} In this case $n = n_{k+1} + 2^{\beta} + n'2^{\beta+1}$ for some $n' \geq 0$. 

Suppose first that $2^\beta$ occurs in the binary expansion of $s$, so $s = s'' + 2^{\beta} + s'2^{\beta+1}$ for some $0 \leq s' \leq n'$ and $0 \leq s'' < 2^{\beta}$. If $n' > s'$, then $n-s > 2^{\beta}$ and the claim holds. If $n' = s'$, then $n-s = n_{k+1} - s''$, so $s'' = 0$ since $n-s \geq n_{k+1}$. Thus we can choose $x = s \equiv 2^{\beta} \mod{2^{\beta+1}}$.

If $2^{\beta}$ does not occur in the binary expansion of $s$, then $s = s'' + s'2^{\beta+1}$ for some $0 \leq s' \leq n'$ and $0 \leq s'' < 2^{\beta}$. We have $n-s = n_{k+1}-s'' + 2^{\beta} \geq 2^\beta - s''$. Thus we can choose $x = s + (2^{\beta}-s'') \equiv 2^{\beta} \mod{2^{\beta+1}}$.\\

\noindent \emph{Case 2: $k \not\equiv 0 \mod{2}$.} We have $n = -n_{k+1} + 2^{\beta} + n'2^{\beta+1}$ for some $n' \geq 0$. 

As in the previous case, suppose first that $2^\beta$ occurs in the binary expansion of $s$. Then $s = s'' + 2^{\beta} + s'2^{\beta+1}$ for some $0 \leq s' < n'$ and $0 \leq s'' < 2^{\beta}$. If $n' > s' + 1$, then $n-s > 2^{\beta}$ and the claim holds. If $n' = s'+1$, we have $$n-s = 2^{\beta+1}-n_{k+1}-s'' \geq 2^{\beta+1}-s''-2^{\beta-1}$$ since $n_{k+1} \leq 2^{\beta-1}$. It follows then that $n-s \geq 2^{\beta-1}$, so $$2(n-s) \geq (2^{\beta+1} - s'' - 2^{\beta-1}) + 2^{\beta-1} = 2^{\beta+1}-s''.$$ Hence we can pick $x = s + (2^{\beta+1}-s'') \equiv 2^{\beta} \mod{2^{\beta+1}}$.

Consider then the case where $2^{\beta}$ does not occur in the binary expansion of $s$, so $s = s'' + s'2^{\beta+1}$ for some $0 \leq s' \leq n'$ and $0 \leq s'' < 2^{\beta}$. If $n' > s'$, we have $n-s > 2^{\beta}$, so assume that $n' = s'$. In this case $n-s = 2^{\beta}-s''-n_{k+1}$. Since $n-s \geq n_{k+1}$, it follows that $2^{\beta}-s'' \geq 2n_{k+1}$. Thus $$2(n-s) = 2(2^{\beta}-s'') - 2n_{k+1} \geq 2^{\beta}-s'',$$ so we can choose $x = s + (2^{\beta}-s'') \equiv 2^{\beta} \mod{2^{\beta+1}}$.
\end{proof}

\begin{lemma}\label{lemma:j0lemma}
Let $\beta = \beta_k > 0$ and $1 \leq s \leq n$ with $n-s \geq n_{k+1}$. Then there exists an integer $j_0 \geq 0$ such that the following hold: 

\begin{enumerate}[\normalfont (i)]
\item $s \leq \lfloor s/2 \rfloor + 2^{\beta-1} + j_02^{\beta} \leq n$,
\item $s \leq \lceil s/2 \rceil + 2^{\beta-1} + j_02^{\beta} \leq n$.
\end{enumerate}
\end{lemma}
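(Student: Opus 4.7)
My plan is to reduce this lemma directly to Lemma~\ref{lemma:mod2b1calc} by a halving trick. That lemma, applied under the identical hypotheses on $\beta = \beta_k$ and $s$, already produces an integer $x \in [s, 2n-s]$ with $x \equiv 2^\beta \pmod{2^{\beta+1}}$; essentially all the combinatorial work---splitting by the parity of $k$, tracking binary expansions, and invoking $n - s \geq n_{k+1}$---has happened there, and the present lemma should be a fairly mechanical repackaging.

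Given such an $x$, the congruence forces $x$ to be even, so I would set $y := x/2$, obtaining a positive integer with $y \equiv 2^{\beta-1} \pmod{2^{\beta}}$. Since the smallest positive integer in that residue class is $2^{\beta-1}$ itself, automatically $y \geq 2^{\beta-1}$, and one may write $y = 2^{\beta-1} + j_0 2^{\beta}$ for a uniquely determined integer $j_0 \geq 0$. Dividing the bounds $s \leq x \leq 2n-s$ by $2$ and remembering that $s, n, y \in \Z$ gives the cleaner range $\lceil s/2 \rceil \leq y \leq n - \lceil s/2 \rceil$, which is exactly what is needed.

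The verification of (i) and (ii) is then a one-line arithmetic check using $\lfloor s/2 \rfloor + \lceil s/2 \rceil = s$ together with $\lfloor s/2 \rfloor \leq \lceil s/2 \rceil$: both expressions $\lfloor s/2 \rfloor + y$ and $\lceil s/2 \rceil + y$ are at least $s$ (by the lower bound $y \geq \lceil s/2 \rceil$) and at most $n$ (by the upper bound $y \leq n - \lceil s/2 \rceil$). I do not expect any real obstacle here; the only minor point to be careful about is confirming $j_0 \geq 0$, and this is forced by $y$ being a positive representative of its congruence class, as noted above. If there were to be a subtle issue, it would most likely come from the translation between the ``$x$-version'' and the ``$y$-version'' of the bounds when $s$ is odd, so I would double-check that step explicitly.
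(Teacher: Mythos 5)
Your proposal is correct and takes essentially the same route as the paper: both reduce the statement to Lemma~\ref{lemma:mod2b1calc} by a factor-of-two rescaling (the paper multiplies the desired inequalities by $2$ and splits into cases according to the parity of $s$, while you divide the $x$ produced by that lemma by $2$ and use $\lfloor s/2\rfloor+\lceil s/2\rceil=s$ to handle both parities at once). All the steps you flag as needing care --- $x$ even since $\beta>0$, $j_0\geq 0$ forced by $y>0$, and the translation $y\leq n-\lceil s/2\rceil$ --- check out.
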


\begin{proof}If $s$ is even, both (i) and (ii) are equivalent to $s \leq 2^{\beta} + j_02^{\beta+1} \leq 2n-s$, so the existence of such a $j_0 \geq 0$ follows from Lemma \ref{lemma:mod2b1calc}. If $s$ is odd, then both (i) and (ii) hold if and only if $s+1 \leq 2^{\beta} + j_02^{\beta+1} \leq 2n-s-1$. In this case, the existence of such a $j_0 \geq 0$ follows from Lemma \ref{lemma:mod2b1calc} since $s$ and $2n-s$ are odd.\end{proof}

For the next lemma, we fix $1 \leq k \leq r$ and set $\beta := \beta_k$. For $1 \leq s \leq n$ with $n_k > n-s \geq n_{k+1}$, we define a vector $w_s \in W_n \otimes W_n$ as follows. If $\beta = 0$, we set $w_s = z_s$. If $\beta > 0$, we define $$w_s := \sum_{-j_0 \leq j \leq j_0} v_{\lfloor s/2 \rfloor + 2^{\beta-1} + j2^{\beta}} \otimes v_{\lceil s/2 \rceil + 2^{\beta-1} - j2^{\beta}},$$ where $j_0 \geq 0$ is as in Lemma \ref{lemma:j0lemma}.

\begin{lemma}\label{lemma:2b1lemma}
Let $1 \leq s \leq n$ with $n_k > n-s \geq n_{k+1}$. Then $e^{2^{\beta}-1} w_s = z_s$.
\end{lemma}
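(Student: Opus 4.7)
The plan is to apply the binomial formula \eqref{eq:binomformula} with exponent $k = 2^{\beta} - 1$ and exploit that in characteristic two every binomial coefficient $\binom{2^{\beta}-1}{t}$ with $0 \le t \le 2^{\beta}-1$ equals $1$ (by Lucas' theorem, since the binary expansion of $2^{\beta}-1$ consists of $\beta$ ones). The case $\beta = 0$ is trivial, because then $w_s = z_s$ by definition and $e^{0}$ is the identity, so from here on I assume $\beta > 0$. The binomial formula collapses to
$$e^{2^{\beta}-1}(v_i \otimes v_j) = \sum_{t=0}^{2^{\beta}-1} v_{i-t} \otimes v_{j-2^{\beta}+1+t},$$
and applying this termwise to $w_s$ produces a double sum indexed by $-j_0 \le j \le j_0$ and $0 \le t \le 2^{\beta}-1$.

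Next I would reindex. Writing $a = \lfloor s/2 \rfloor + 2^{\beta-1}$ and $b = \lceil s/2 \rceil + 2^{\beta-1}$, so that $a + b = s + 2^{\beta}$, each summand takes the form $v_u \otimes v_{s+1-u}$ with $u = a + j 2^{\beta} - t$, because the two subscripts add to $a + b - 2^{\beta} + 1 = s + 1$. For fixed $j$, as $t$ runs through $\{0, \ldots, 2^{\beta}-1\}$, the index $u$ runs through the block $\{a + (j-1)2^{\beta} + 1, \ldots, a + j 2^{\beta}\}$ of $2^{\beta}$ consecutive integers; as $j$ varies from $-j_0$ to $j_0$, these blocks fit together into the single interval $u \in [L, U]$ with $L = a - (j_0+1)2^{\beta} + 1$ and $U = a + j_0 2^{\beta}$. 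Thus
$$e^{2^{\beta}-1} w_s = \sum_{u=L}^{U} v_u \otimes v_{s+1-u}.$$

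It remains to show this reduces to $z_s = \sum_{u=1}^{s} v_u \otimes v_{s+1-u}$. Lemma \ref{lemma:j0lemma}(i) directly gives $U \ge s$. For $L \le 1$, I would use Lemma \ref{lemma:j0lemma}(ii): the bound $s \le b + j_0 2^{\beta}$ rearranges via $a = s + 2^{\beta} - b$ into $a \le (j_0+1)2^{\beta}$, which is exactly $L \le 1$. Finally, terms with $u \le 0$ or $u > n$ vanish because $v_u = 0$, and terms with $s < u \le n$ vanish because $s+1-u \le 0$ forces $v_{s+1-u} = 0$; what remains is precisely $\sum_{u=1}^{s} v_u \otimes v_{s+1-u} = z_s$. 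I expect no real obstacle at this stage — the genuine combinatorial content has been packaged into Lemma \ref{lemma:j0lemma}, and the role of this lemma is just careful bookkeeping together with the characteristic-two vanishing of binomial coefficients.
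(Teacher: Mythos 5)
Your proposal is correct and follows essentially the same route as the paper's proof: Lucas' theorem collapses the binomial formula, the resulting blocks of indices concatenate into a single interval $[L,U]$ with $L=\ell$ and $U=\ell'$ exactly as in the paper, and Lemma \ref{lemma:j0lemma}(i) and (ii) yield $U\geq s$ and $L\leq 1$ by the same rearrangement. The only point you leave implicit (as the paper notes in one sentence) is that Lemma \ref{lemma:j0lemma} also guarantees every summand of $w_s$ has both subscripts at most $n$, which is needed before \eqref{eq:binomformula} can be applied termwise.
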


\begin{proof}If $\beta = 0$, there is nothing to prove since $w_s = z_s$. Suppose then that $\beta > 0$. By Lucas' theorem, we have $\binom{2^{\beta}-1}{t} \equiv 1 \mod{2}$ for all $0 \leq t \leq 2^{\beta}-1$. Thus with~\eqref{eq:binomformula}, we get \begin{align}\nonumber e^{2^{\beta}-1} \cdot (v_i \otimes v_j) &= \sum_{0 \leq t \leq 2^{\beta}-1} v_{i-t} \otimes v_{j-2^{\beta}+1+t} \\ \label{eq:bb2} &= \sum_{i-2^{\beta}+1 \leq t \leq i} v_t \otimes v_{i+j-2^{\beta}+1-t} \end{align} for all integers $i,j \leq n$. By Lemma \ref{lemma:j0lemma} each summand in the definition of $w_s$ is of the form $v_i \otimes v_j$ for some $i,j \leq n$, so by~\eqref{eq:bb2} we have $$e^{2^{\beta}-1} \cdot w_s = \sum_{\ell \leq t \leq \ell'} v_t \otimes v_{s+1-t},$$ where $\ell = \lfloor s/2 \rfloor + 2^{\beta-1} - (j_0+1)2^{\beta} + 1$ and $\ell' = \lfloor s/2 \rfloor + 2^{\beta-1} + j_02^{\beta}$.

Thus in order to prove that $e^{2^{\beta}-1} \cdot w_s = z_s$, it will suffice to show that $\ell \leq 1$ and $\ell' \geq s$. First note that the inequality $\ell' \geq s$ is just Lemma \ref{lemma:j0lemma} (i). Next, by Lemma \ref{lemma:j0lemma} (ii), we have $\lfloor s/2 \rfloor = s - \lceil s/2 \rceil \leq 2^{\beta-1} + j_02^{\beta}.$ Thus $$\ell \leq (2^{\beta-1} + j_02^{\beta}) + 2^{\beta-1} - (j_0+1)2^{\beta} + 1 = 1,$$ which completes the proof of the lemma.\end{proof}

We are now ready to prove the main result of this section.

\begin{lause}\label{thm:basistensorsquare}
For $1 \leq k \leq r$, define $$B_k := \{ e^j w_s : n_k > n-s \geq n_{k+1} \text { and } 0 \leq j \leq 2^{\beta_k}-1 \}.$$ Then $B := \cup_{1 \leq k \leq r} B_k$ is a Jordan basis for the action of $e$ on $W_n \otimes W_n$.
\end{lause}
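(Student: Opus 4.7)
The plan is to apply Lemma \ref{lemma:jordanchains} with the basis of fixed points given by Lemma \ref{lemma:fixedpointstensor}. For each $1 \leq s \leq n$, there is a unique index $1 \leq k(s) \leq r$ with $n_{k(s)} > n-s \geq n_{k(s)+1}$, and by Lemma \ref{lemma:2b1lemma} we have $e^{2^{\beta_{k(s)}}-1} w_s = z_s$. Since Lemma \ref{lemma:fixedpointstensor} tells us that $z_1, \ldots, z_n$ are linearly independent and lie in $(W_n \otimes W_n)^e$, Lemma \ref{lemma:jordanchains} immediately gives that $B = \bigcup_k B_k$ is a linearly independent subset of $W_n \otimes W_n$, and moreover that $B$ is a disjoint union of $n$ Jordan chains (one per value of $s$), the chain corresponding to $s$ having length $2^{\beta_{k(s)}}$.

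It remains to show $|B| = n^2 = \dim(W_n \otimes W_n)$, since then $B$ is automatically a basis, and hence a Jordan basis by construction. For each $1 \leq k \leq r$, the number of indices $s$ with $n_k > n-s \geq n_{k+1}$ is exactly $n_k - n_{k+1}$, so $|B_k| = 2^{\beta_k}(n_k - n_{k+1})$. The key identity is $n_k - n_{k+1} = 2^{\beta_k} - 2n_{k+1}$, which follows directly from the definition $n_k = \sum_{k \leq i \leq r}(-1)^{i+k}2^{\beta_i}$. Therefore
\[
|B| \;=\; \sum_{k=1}^{r} 2^{\beta_k}\bigl(2^{\beta_k} - 2n_{k+1}\bigr) \;=\; \sum_{k=1}^{r} 2^{2\beta_k} \;-\; 2\sum_{k=1}^{r} 2^{\beta_k} n_{k+1}.
\]

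To match this with $n^2$, expand
\[
n^2 \;=\; \Bigl(\sum_{i=1}^{r}(-1)^{i+1}2^{\beta_i}\Bigr)^{\!2} \;=\; \sum_{i=1}^{r} 2^{2\beta_i} \;+\; 2\sum_{1 \leq i < j \leq r}(-1)^{i+j}2^{\beta_i+\beta_j},
\]
and observe that $\sum_{k=1}^{r} 2^{\beta_k} n_{k+1} = \sum_{k < j}(-1)^{j+k+1} 2^{\beta_k+\beta_j} = -\sum_{k<j}(-1)^{k+j}2^{\beta_k+\beta_j}$. Substituting gives $|B| = n^2$, completing the proof.

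The only real work has already been done in Lemmas \ref{lemma:jordanchains}--\ref{lemma:2b1lemma}; the present theorem is essentially a packaging step. The main potential obstacle is the final dimension count, but this reduces to the elementary telescoping identity $n_k - n_{k+1} = 2^{\beta_k} - 2n_{k+1}$ together with a straightforward expansion of $n^2$ in the consecutive-ones expansion.
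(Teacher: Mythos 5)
Your proposal is correct and follows essentially the same route as the paper's proof: linear independence of $B$ via Lemma \ref{lemma:2b1lemma} combined with Lemma \ref{lemma:jordanchains}, followed by the count $|B_k| = 2^{\beta_k}(n_k - n_{k+1})$ and the verification that these sizes sum to $n^2$. Your explicit expansion of $n^2$ via $n_k - n_{k+1} = 2^{\beta_k} - 2n_{k+1}$ is exactly the ``straightforward calculation'' the paper leaves to the reader.
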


\begin{proof}By Lemma \ref{lemma:2b1lemma} and Lemma \ref{lemma:jordanchains}, the vectors in $B$ are linearly independent. To prove that $B$ is a Jordan basis, it will suffice to show that $|B| = \dim W_n \otimes W_n = n^2$. To this end, note first that $|B_k| = 2^{\beta_k}(n_k - n_{k+1})$ for all $1 \leq k \leq r$. Furthermore, we have \begin{equation}\label{eq:nkdif}n_k - n_{k+1} = 2^{\beta_k} + 2 \sum_{k < i \leq r} (-1)^{i+k} 2^{\beta_i}\end{equation} for all $1 \leq k \leq r$. With~\eqref{eq:nkdif}, a straightforward calculation shows that \begin{equation}\label{eq:n2equality}n^2 = \sum_{1 \leq k \leq r} 2^{\beta_k} (n_k - n_{k+1}).\end{equation} Hence $n^2 = \sum_{1 \leq k \leq r} |B_k| = |B|$, which completes the proof.\end{proof}

As an immediate corollary, we recover the following result from \cite{GlasbyPraegerXiapart} --- see Proposition \ref{prop:uninilconnection}.

\begin{lause}[{\cite[Theorem 15]{GlasbyPraegerXiapart}}] For $1 \leq k \leq r$, define $d_k := 2^{\beta_k} + \sum_{k < i \leq r} (-1)^{k+i} 2^{\beta_i+1}$. Then $$W_n \otimes W_n \cong \bigoplus_{1 \leq k \leq r} W_{2^{\beta_k}}^{d_k}$$ as $\mathfrak{w}_q$-modules.
\end{lause}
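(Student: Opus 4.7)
The plan is to read off the decomposition directly from the Jordan basis $B = \bigcup_{1 \leq k \leq r} B_k$ constructed in Theorem \ref{thm:basistensorsquare}. For each fixed $k$ and each $s$ with $n_k > n - s \geq n_{k+1}$, the vectors $\{ w_s, e w_s, \ldots, e^{2^{\beta_k}-1} w_s \}$ form a Jordan chain of length exactly $2^{\beta_k}$: the top vector is $e^{2^{\beta_k}-1} w_s = z_s$ by Lemma \ref{lemma:2b1lemma}, which is nonzero and annihilated by $e$ by Lemma \ref{lemma:fixedpointstensor}. Since $B$ is a Jordan basis, each such chain contributes one indecomposable summand isomorphic to $W_{2^{\beta_k}}$.

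Consequently, the number of summands $W_{2^{\beta_k}}$ equals the number of integers $s$ with $1 \leq s \leq n$ satisfying $n_k > n - s \geq n_{k+1}$, which is $n_k - n_{k+1}$. The remaining step is therefore purely arithmetic: one must verify $n_k - n_{k+1} = d_k$. But this is already recorded as equation~\eqref{eq:nkdif}, which gives
\[
n_k - n_{k+1} = 2^{\beta_k} + 2 \sum_{k < i \leq r} (-1)^{i+k} 2^{\beta_i} = 2^{\beta_k} + \sum_{k < i \leq r} (-1)^{k+i} 2^{\beta_i + 1} = d_k.
\]

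Assembling the contributions over all $k$ yields $W_n \otimes W_n \cong \bigoplus_{1 \leq k \leq r} W_{2^{\beta_k}}^{d_k}$. Since all the technical work has already been completed (the Jordan basis is explicit, the chain lengths are known, and~\eqref{eq:nkdif} is stated), there is no real obstacle here: the corollary is a formal bookkeeping consequence of Theorem \ref{thm:basistensorsquare} together with identity~\eqref{eq:nkdif}.
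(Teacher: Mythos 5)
Your proof is correct and is exactly the argument the paper intends: the paper states this result as an immediate corollary of Theorem \ref{thm:basistensorsquare} without spelling out the bookkeeping, and your count of chains of length $2^{\beta_k}$ (namely $n_k - n_{k+1}$, which equals $d_k$ by~\eqref{eq:nkdif}) is precisely that omitted bookkeeping.
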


\section{Decomposition of $S^2(W_n)$ and $\wedge^2(W_n)$}\label{section:jordanbasis2}

We continue with the notation from the previous section. In this section, we will describe a Jordan basis for the action of $e$ on $S^2(W_n)$, using the Jordan basis of $e$ on $W_n \otimes W_n$ described in Theorem \ref{thm:basistensorsquare}. This will then allow us to prove Theorem \ref{thm:mainthmextnilpotent} and Theorem \ref{thm:mainthmsymnilpotent}, our main results for $\wedge^2(W_n)$ and $S^2(W_n)$. 

Let $\pi: W_n \otimes W_n \rightarrow S^2(W_n)$ be the natural quotient map, defined by $\pi(v \otimes w) = vw$ for all $v,w \in W_n$.

\begin{lause}\label{thm:jordanbasissymsquare} Let $B_0' := \{\pi(w_s) : 1 \leq s \leq n \text { even}\}$ and $$B_k' := \{e^j \pi(w_s) : s \text{ odd, } n_k > n-s \geq n_{k+1}, \text{ and } 0 \leq j \leq 2^{\beta_k}-1\}$$ for all $1 \leq k \leq r$. Then $B' := \bigcup_{0 \leq k \leq r} B_k'$ is a Jordan basis for the action of $e$ on $S^2(W_n)$.\end{lause}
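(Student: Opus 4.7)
The strategy is to push the Jordan basis $B$ of Theorem \ref{thm:basistensorsquare} through the quotient $\pi \colon W_n \otimes W_n \to S^2(W_n)$, and to observe that the parity of $s$ dictates whether the Jordan chain headed by $w_s$ survives in $S^2(W_n)$ or collapses to a single fixed point.

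First I would compute $\pi(w_s)$ via a symmetry argument. For $s$ even and $\beta_k > 0$, the $j$-th and $(-j)$-th summands of $w_s$ both map under $\pi$ to the product $v_{s/2 + 2^{\beta_k-1}+j2^{\beta_k}} \cdot v_{s/2 + 2^{\beta_k-1}-j2^{\beta_k}}$, so these contributions cancel in characteristic two for each $j \neq 0$, leaving only the $j = 0$ term and giving $\pi(w_s) = v_{s/2 + 2^{\beta_k - 1}}^2$. An analogous pairing of $i$ with $s+1-i$ in $z_s = \sum_i v_i \otimes v_{s+1-i}$ yields $\pi(z_s) = v_{(s+1)/2}^2$ when $s$ is odd. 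Since $e \cdot v_i^2 = 2 v_{i-1} v_i = 0$, we conclude: for $s$ even, $\pi(w_s)$ is a fixed point, so $\pi(e^j w_s) = 0$ for $j \geq 1$; while for $s$ odd, Lemma \ref{lemma:2b1lemma} gives $e^{2^{\beta_k}-1}\pi(w_s) = v_{(s+1)/2}^2 \neq 0$, so $\pi(w_s), e\pi(w_s), \ldots, e^{2^{\beta_k}-1}\pi(w_s)$ is a Jordan chain of length exactly $2^{\beta_k}$.

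Since $B$ spans $W_n \otimes W_n$ and the ``redundant'' vectors $e^j w_s$ (with $s$ even, $j \geq 1$) all vanish under $\pi$, the set $B'$ spans $S^2(W_n)$. For linear independence I would apply Lemma \ref{lemma:jordanchains} in $S^2(W_n)$: it suffices to show that the ``bottoms'' of the chains in $B'$,
\[
\{v_{(s+1)/2}^2 : 1 \leq s \leq n,\ s \text{ odd}\}\ \cup\ \{v_{s/2 + 2^{\beta_k - 1}}^2 : 1 \leq s \leq n,\ s \text{ even}\},
\]
are linearly independent in $S^2(W_n)^e$. Since $v_1^2, \ldots, v_n^2$ are linearly independent standard basis elements of $S^2(W_n)$, this reduces to showing the associated indices are pairwise distinct.

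The main obstacle is this combinatorial claim, which I would settle by showing that the index map is a bijection onto $\{1, \ldots, n\}$. The odd $s$'s clearly contribute $\{1, 2, \ldots, \lceil n/2 \rceil\}$. For even $s = 2m$ in range $k$, the constraint $n_k > n-s \geq n_{k+1}$ forces $m \in (\tfrac{n-n_k}{2}, \tfrac{n-n_{k+1}}{2}]$; using the identity $2^{\beta_k} = n_k + n_{k+1}$, which is immediate from the alternating telescoping formula $n_k = \sum_{k \leq i \leq r}(-1)^{k+i}2^{\beta_i}$, the index $m + 2^{\beta_k - 1}$ ranges precisely over the consecutive integers $[\lfloor (n + n_{k+1})/2 \rfloor + 1,\ \lfloor (n + n_k)/2 \rfloor]$. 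These intervals are adjacent as $k$ varies over those with $\beta_k > 0$ (the upper endpoint of range $k$ is one less than the lower endpoint of range $k-1$), and together they exhaust $\{\lceil n/2 \rceil + 1, \ldots, n\}$: at $k = 1$ the upper endpoint equals $n$, and at the smallest such $k$ the lower endpoint equals $\lceil n/2 \rceil + 1$, where in the odd-$n$ case the value $n_r = 1$ compensates for omitting $k = r$. Combined with the odd-$s$ contribution this gives the desired bijection, so Lemma \ref{lemma:jordanchains} applies and $B'$ is the claimed Jordan basis for $S^2(W_n)$.
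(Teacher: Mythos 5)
Your proof is correct, and it diverges from the paper's in a meaningful way in its second half. The first half is the same: both arguments compute $\pi(w_s)$ by pairing the $j$ and $-j$ summands (so $\pi(w_s)=v_{s/2+2^{\beta_k-1}}^2$ for $s$ even) and $\pi(z_s)=v_{(s+1)/2}^2$ for $s$ odd, conclude that the even-$s$ chains collapse to fixed points while the odd-$s$ chains survive with full length $2^{\beta_k}$, and deduce that $B'=\pi(B)\setminus\{0\}$ spans $S^2(W_n)$. Where you differ is in certifying that this spanning set is a basis: the paper simply counts, using~\eqref{eq:n2equality} to check $|B'|=n(n+1)/2=\dim S^2(W_n)$, whereas you prove linear independence directly by applying Lemma \ref{lemma:jordanchains} inside $S^2(W_n)$ and showing that the chain bottoms are the distinct monomials $v_i^2$. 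Your combinatorial step is sound: the identity $2^{\beta_k}=n_k+n_{k+1}$ is immediate from $n_k=2^{\beta_k}-n_{k+1}$, the index intervals $[\lfloor(n+n_{k+1})/2\rfloor+1,\lfloor(n+n_k)/2\rfloor]$ do tile $\{\lceil n/2\rceil+1,\dots,n\}$, and the odd-$s$ indices fill $\{1,\dots,\lceil n/2\rceil\}$. The paper's count is shorter, but your argument buys something extra: it exhibits the chain bottoms as exactly $\{v_1^2,\dots,v_n^2\}$, which immediately gives $S^2(W_n)^e=W_n^{[2]}$ and reproves Corollary \ref{corollary:fixedptsymsq} without the separate chain-counting argument; the paper instead has to derive that identification later, in the proof of Theorem \ref{thm:mainthmextnilpotent}, from the block count.
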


\begin{proof}Let $B$ be the Jordan basis of $W_n \otimes W_n$ as in Theorem \ref{thm:basistensorsquare}. We will begin by showing that \begin{equation}\label{eq:basisequality}B' = \pi(B) \setminus \{0\},\end{equation} which implies that $B'$ spans $S^2(W_n)$. To this end, first note that for $1 \leq s \leq n$ odd, we have $\pi(z_s) = \pi(v_{(s+1)/2} \otimes v_{(s+1)/2}) \neq 0$. Furthermore, the map $\pi$ is invariant under the action of $e$ and $e^{2^{\beta_k}} w_s = z_s$ by Lemma \ref{lemma:2b1lemma}. We conclude then that for odd $s$ with $n_k > n-s \geq n_{k+1}$, we have $\pi(e^j w_s) = e^j \pi(w_s) \neq 0$ for all $0 \leq j \leq 2^{\beta_k}-1$.

For $1 \leq s \leq n$ even, we have $\pi(w_s) = \pi(v_{s/2+2^{\beta-1}} \otimes v_{s/2+2^{\beta-1}}) \in S^2(W_n)^e$ for some $\beta > 0$, and thus $\pi(e^j w_s) = 0$ for all $j > 0$. This completes the proof of~\eqref{eq:basisequality}.

Now to show that $B'$ is a Jordan basis for the action of $e$ on $S^2(W_n)$, it will suffice to show that $|B'| = \dim S^2(W_n) = n(n+1)/2$. Consider first the case where $n$ is even. Then $$\sum_{1 \leq k \leq r} |B_k'| = \sum_{1 \leq k \leq r} 2^{\beta_k} \frac{(n_k - n_{k+1})}{2} = n^2 / 2$$ by~\eqref{eq:n2equality}. Furthermore, we have $|B_0'| = n/2$, so $|B| = n(n+1)/2$. Suppose next that $n$ is odd. In this case $\beta_r = 0$ and $|B_r| = 1$, so $$\sum_{1 \leq k \leq r} |B_k'| = 1 + \sum_{1 \leq k \leq r-1} 2^{\beta_k} \frac{(n_k - n_{k+1})}{2} = (n^2+1)/2$$ by~\eqref{eq:n2equality}. Since $|B_0'| = (n-1)/2$, we again get $|B| = n(n+1)/2$, as claimed.\end{proof}

We can now prove our main result for $S^2(W_n)$.

\begin{proof}[Proof of Theorem \ref{thm:mainthmsymnilpotent}]This is just a matter of counting Jordan chains in the basis $B'$ described in Theorem \ref{thm:jordanbasissymsquare}. The Jordan block sizes that occur are $2^{\beta_k}$ for $1 \leq k \leq r$. For $\beta_k > 0$, the multiplicity of $2^{\beta_k}$ is $|B_k'|/2^{\beta_k} = (n_k - n_{k+1})/2 = d_k$. Now what remains is to count the number of blocks of size $1$. If $n$ is even, this is given by $|B_0'| = n/2$. If $n$ is odd, we have $\beta_r = 0$ and the multiplicity is given by $|B_0'| + |B_r'| = (n+1)/2$.\end{proof} 

As a corollary of Theorem \ref{thm:jordanbasissymsquare}, we also get the following.

\begin{seur}\label{corollary:fixedptsymsq}
The action of $e$ has $n$ Jordan blocks on $S^2(W_n)$.
\end{seur}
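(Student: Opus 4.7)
The plan is to read off the total number of Jordan blocks directly from Theorem \ref{thm:mainthmsymnilpotent}. Since the number of Jordan blocks of $e$ on $S^2(W_n)$ equals the total number of indecomposable summands in the decomposition provided there, it suffices to verify
\[
N := \lceil n/2 \rceil + \sum_{\substack{1 \leq k \leq r \\ \beta_k > 0}} d_k = n.
\]

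The key observation is that equation~\eqref{eq:nkdif} can be rewritten as $2 d_k = n_k - n_{k+1}$ whenever $\beta_k > 0$, which makes the sum above telescope. I would then split into cases according to the parity of $n$. If $n$ is even, then $\beta_r \geq 1$ so every index contributes, and $\sum_{k=1}^{r} d_k = (n_1 - n_{r+1})/2 = n/2$, giving $N = n/2 + n/2 = n$. If $n$ is odd, then $\beta_r = 0$ and $n_r = 1$, so the $k=r$ term is excluded and $\sum_{k=1}^{r-1} d_k = (n_1 - n_r)/2 = (n-1)/2$, giving $N = (n+1)/2 + (n-1)/2 = n$.

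An essentially equivalent, slightly more self-contained route would be to count Jordan chains in the basis $B'$ of Theorem~\ref{thm:jordanbasissymsquare} directly: each element of $B_0'$ is itself a chain of length one, and for $\beta_k > 0$ the set $B_k'$ partitions into $|B_k'|/2^{\beta_k}$ chains of length $2^{\beta_k}$. This reduces to the same telescoping computation, with the extra chain coming from $B_r'$ in the odd case being absorbed into the $\lceil n/2 \rceil$ count of length-one chains. There is no real obstacle here; the only points requiring care are the parity split of $n$ and the identification $d_k = (n_k - n_{k+1})/2$, both of which are already in hand.
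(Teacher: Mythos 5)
Your proof is correct, but it takes a different route from the paper's. You verify the identity $\lceil n/2\rceil + \sum_{\beta_k>0} d_k = n$ arithmetically from the explicit decomposition in Theorem \ref{thm:mainthmsymnilpotent}, using $d_k = (n_k-n_{k+1})/2$ and telescoping, with a parity split on $n$; all the details check out (in particular $n$ is odd exactly when $\beta_r=0$, in which case $n_r=1$, and there is no circularity since Theorem \ref{thm:mainthmsymnilpotent} is established before the corollary). The paper instead argues structurally: the quotient map $\pi$ sends each Jordan chain of the basis $B$ of $W_n\otimes W_n$ to a Jordan chain of $B'$ (the even-$s$ chains collapse to length one but survive as chains), so $e$ has the same number of blocks on $S^2(W_n)$ as on $W_n\otimes W_n$, and that number is $\dim(W_n\otimes W_n)^e = n$ by Lemma \ref{lemma:fixedpointstensor}. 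The paper's argument avoids the telescoping computation entirely and makes transparent \emph{why} the count is $n$; your computation has the minor advantage of depending only on the final decomposition formula rather than on the particular Jordan bases $B$ and $B'$. Your alternative suggestion of counting chains in $B'$ directly is closer in spirit to the paper but still reduces to the same arithmetic rather than to the chain-preservation observation.
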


\begin{proof}Let $B$ and $B'$ be the Jordan bases described in Theorem \ref{thm:basistensorsquare} and Theorem \ref{thm:jordanbasissymsquare}, respectively. Each Jordan chain in $B$ is mapped by $\pi$ to a Jordan chain in $B'$. Therefore $e$ has the same number of Jordan blocks on $W_n \otimes W_n$ and $S^2(W_n)$, and the claim follows from Lemma \ref{lemma:fixedpointstensor}.\end{proof}

We are now ready to prove the rest of our main results: the decomposition theorem for $\wedge^2(W_n)$ (Theorem \ref{thm:mainthmextnilpotent}) and the recurrence relations for $\wedge^2(W_n)$ and $S^2(W_n)$ (Theorems \ref{thm:mainthmrecursivenilpotentext} and \ref{thm:mainthmrecursivenilpotentsym}).

\begin{proof}[Proof of Theorem \ref{thm:mainthmextnilpotent}]Let $\pi': S^2(W_n) \rightarrow \wedge^2(W_n)$ be the map defined by $\pi'(vw) = v \wedge w$ for all $v,w \in W_n$. Then we have a short exact sequence $$0 \rightarrow W_n^{[2]} \rightarrow S^2(W_n) \xrightarrow{\pi'} \wedge^2(W_n) \rightarrow 0$$ of $\mathfrak{w}_q$-modules, where $W_n^{[2]}$ is the subspace of $S^2(W_n)$ spanned by elements of the form $v^2$ for $v \in V$. Now $\dim W_n^{[2]} = n$ and $W_n^{[2]}$ is annihilated by the action of $e$, so by Corollary \ref{corollary:fixedptsymsq} we have $W_n^{[2]} = S^2(W_n)^e$. Therefore \begin{equation}\label{eq:modfixspace}\wedge^2(W_n) \cong S^2(W_n) / S^2(W_n)^e\end{equation} as $\mathfrak{w}_q$-modules.

For any $\mathfrak{w}_q$-module $V$, it is easy to see that if $V \cong W_{r_1} \oplus \cdots \oplus W_{r_t}$ for some integers $r_1, \ldots, r_t > 0$, then $V/V^e \cong W_{r_1 - 1} \oplus \cdots \oplus W_{r_t - 1}$. Thus the claim follows from~\eqref{eq:modfixspace} and Theorem \ref{thm:mainthmsymnilpotent}.\end{proof}

\begin{proof}[Proof of Theorem \ref{thm:mainthmrecursivenilpotentext}]
With the assumption $q/2 < n \leq q$, we have $q = 2^{\beta_1}$. By Theorem \ref{thm:mainthmextnilpotent}, we have $$\wedge^2(W_{q-n}) \cong \bigoplus_{\substack{1 < k \leq r \\ \beta_k > 0}} W_{2^{\beta_k}-1}^{d_k}$$ as $\mathfrak{w}_q$-modules, where $d_k := 2^{\beta_k - 1} + \sum_{k < i \leq r} (-1)^{k+i} 2^{\beta_i}$ for all $1 \leq k \leq r$. Thus by applying Theorem \ref{thm:mainthmextnilpotent} to $\wedge^2(W_n)$, we conclude that $$\wedge^2(W_n) \cong \wedge^2(W_{q-n}) \oplus W_{2^{\beta_1}}^{d_1}$$ as $\mathfrak{w}_q$-modules. Since $d_1 = n-q/2$, the claim follows.\end{proof}

\begin{proof}[Proof of Theorem \ref{thm:mainthmrecursivenilpotentsym}]Using the same argument as in the proof of Theorem \ref{thm:mainthmrecursivenilpotentext}, the result follows from Theorem \ref{thm:mainthmsymnilpotent}.\end{proof}


\end{document}